\newtheorem{theorem}{Theorem}[section]
\newtheorem{lemma}[theorem]{Lemma}
\newtheorem{proposition}[theorem]{Proposition}
\newtheorem{corollary}[theorem]{Corollary}
\theoremstyle{definition}
\newtheorem{definition}[theorem]{Definition}
\newtheorem{example}[theorem]{Example}
\newtheorem{remark}[theorem]{Remark}
\newcommand{\id}{\text{id}}
\newcommand{\FPdim}{\text{FPdim}}
\newcommand{\Fun}{\text{Fun}}
\newcommand{\End}{\text{End}}
\renewcommand{\Vec}{\text{Vec}}
\newcommand{\Hom}{\text{Hom}}
\newcommand{\Aut}{\text{Aut}}
\newcommand{\Rep}{\text{Rep}}
\newcommand{\C}{\mathcal{C}}
\newcommand{\D}{\mathcal{D}}
\newcommand{\M}{\mathcal{M}}
\newcommand{\N}{\mathcal{N}}
\newcommand{\TY}{\mathcal{T}\mathcal{Y}}
\newcommand{\be}{\mathbf{1}}
\renewcommand{\be}{\mathbf{1}}
\newcommand{\bt}{\boxtimes}
\newcommand{\ot}{\otimes}
\begin{document}
\title[Non group-theoretical semisimple Hopf algebras]
{Non group-theoretical semisimple Hopf algebras
from group actions on fusion categories}

\author{Dmitri Nikshych}
\address{Department of Mathematics and Statistics,
University of New Hampshire,  Durham, NH 03824, USA}
\email{nikshych@math.unh.edu}

\date{December 4, 2007}

\begin{abstract}
Given an action of a finite group $G$ on a fusion category $\C$
we give a criterion for the category of $G$-equivariant objects
in $\C$ to be group-theoretical, i.e., to be categorically Morita
equivalent to a category of group-graded vector spaces.
We use this criterion to answer affirmatively the 
question about existence of non group-theoretical semisimple Hopf algebras 
asked by P.~Etingof, V.~Ostrik, and the author in \cite{ENO}.
Namely, we show that certain  $\mathbb{Z}/2\mathbb{Z}$-equivariantizations 
of fusion categories constructed by D.~Tambara and S.~Yamagami 
\cite{TY} are equivalent to representation categories
of non group-theoretical semisimple Hopf algebras.  We describe
these Hopf algebras as extensions and show that they are upper
and lower semisolvable.
\end{abstract}

\maketitle

\section{Introduction}
\label{Sect: Intro}

\subsection{Conventions} 
Throughout this article we work over an algebraically closed
field $k$ of zero characteristic. 
All cocycles appearing in this  article
have coefficients in the trivial module $k^\times$.
Module categories are assumed to  be $k$-linear and semisimple with finite-dimensional
$\Hom$-spaces and finitely many isomorphism classes of simple objects.
Functors between fusion categories and their module categories 
are assumed to be additive and $k$-linear. 
By a subcategory we always mean a full subcategory.

\subsection{Main result} 
A tensor category  
is said to be {\em group-theoretical} \cite{O2, ENO} if it is  dual to a category 
of group-graded vector spaces with respect to an indecomposable module category, 
see Section~\ref{Sect: Prelim} for definitions.  Group-theoretical 
categories can be  explicitly described in terms of finite groups and their cohomology, 
see \cite{O2} and Section~\ref{sect: group-theor} below.  Such categories
were extensively studied, see e.g., \cite{DGNO, EGO, ENO, GN, Na, NaNi, Nt},  
and many classification results were obtained. In particular,
representation categories of semisimple quasi-Hopf algebras of dimension 
$p^n,\, n=1,2,\dots$ and $pq$,  where $p$ and $q$ are prime numbers, 
are group-theoretical by \cite{DGNO} and \cite{EGO}, respectively.
On the other hand, there exist semisimple quasi-Hopf algebras of dimension
$2p^2$, where $p$ is an odd prime,  
with non group-theoretical representation categories \cite[Remark 8.48]{ENO}.

The  main result of this article is existence  of semisimple
Hopf algebras with non group-theoretical representation categories. 
This answers a question \cite[Question 8.45]{ENO} of P.~Etingof, V.~Ostrik, 
and the author. 

To establish this we construct a series of fusion categories of Frobenius-Perron dimension $4p^2$,
where $p$ is an odd  prime, that are non group-theoretical and admit tensor
functors to the category of vector spaces. By Tannakian formalism
\cite{U} these categories  are equivalent to 
representation categories of some semisimple Hopf algebras.  The construction
of these categories involves an {\em equivariantization} procedure \cite{AG, G}: 
given an action of a finite  group $G$ on a tensor category $\C$ one forms a tensor 
category $\C^G$ of $G$-equivariant objects in $\C$. See Section~\ref{equivar}
for definitions. Specifically, we  take $\C$ to be a group-theoretical 
Tambara-Yamagami fusion category \cite{TY}  of Frobenius-Perron dimension $2p^2$  
and explicitly define  an action  of $G =\mathbb{Z}/2\mathbb{Z}$ on it.
The corresponding category  of equivariant objects is equivalent 
to the representation category of a non group-theoretical semisimple 
Hopf algebra of dimension $4p^2$.

\subsection{Organization of the paper}
Section~\ref{Prelim} contains
basic definitions related to fusion categories and their module categories,
a description of group-theoretical categories, the equivariantization 
construction, and Tambara-Yamagami fusion categories.  
In Section~\ref{Sect: cross}, given an action of a group $G$ on a fusion category $\C$,  
we define a {\em crossed product} category $\C\rtimes G$ 
and show that it is dual to the category $\C^G$ of $G$-equivariant 
objects in $\C$. We prove in Theorem~\ref{criterion for C times G to be gr th} 
that $\C\rtimes G$ and $\C^G$ are group-theoretical if and only if there exists
a $G$-invariant indecomposable $\C$-module category with a pointed dual. 
This criterion is used
in Section~\ref{sect: construction}, where we explicitly construct an action of 
$\mathbb{Z}/2\mathbb{Z}$ on a group-theoretical Tambara-Yamagami category $\C_p$
of Frobenius-Perron dimension $2p^2$, where $p$ is an odd prime, 
and show that there are no $\C_p$-module categories with pointed dual
invariant with respect to this action. 
The equivariantization
categories  $\C_p^{\mathbb{Z}/2\mathbb{Z}}$
are non group-theoretical but admit tensor functors to the category
of vector spaces. Hence,  these categories are
equivalent to representation categories
of semisimple Hopf algebras. This shows that there is a series of semisimple
Hopf algebras $H_p$ of dimension $4p^2$,  where $p$ is an odd prime, 
with  non group-theoretical representation categories.
The algebraic structure of these Hopf algebras
is analyzed in Section~\ref{Hopf}. We describe $H_p$  as 
an extension of already known Hopf algebras and show that it is upper 
and lower semisolvable in the sense of \cite{MW}.

\subsection{Acknowledgments} 
The author's research was supported by the NSA grant H98230-07-1-0081. 
He is grateful to Pavel Etingof, Shlomo Gelaki, 
Victor Ostrik, and Leonid Vainerman for useful discussions.

\section{Preliminaries}
\label{Prelim}

\subsection{Fusion categories and their module categories}
\label{Sect: Prelim}

A {\em fusion category} over $k$ is a
$k$-linear semisimple rigid tensor category \cite{BK, K} with finitely many
isomorphism classes of simple objects, finite-dimensional Hom-spaces,
and simple unit object~$\mathbf{1}$.

Let $\C$ be a fusion category. For any object $X$ in $\C$ its
{\em Frobenius-Perron dimension} $\FPdim(X)$ is defined as the largest non-negative
real eigenvalue of the matrix of multiplication by $X$ in the Grothendieck
semi-ring of $\C$, see \cite{FK} and \cite[Section 8]{ENO}.  
The Frobenius-Perron dimension of $\C$  is, by definition, 
the sum of squares of  Frobenius-Perron dimensions of simple objects 
of $\C$ and is denoted $\FPdim(\C)$. In the special case when $\C=\Rep(H)$ 
is the representation category of a semisimple quasi-Hopf algebra $H$
the Frobenius-Perron dimensions coincide with vector space dimensions,
i.e., $\FPdim(V) =\dim_k(V)$ for any finite-dimensional $H$-module $V$
and $\FPdim(\Rep(H))=\dim_k(H)$.

A fusion category $\C$ is {\em graded} 
by a finite group $G$ if there is a decomposition
\[
\C =\oplus_{g\in G}\, \C_g
\]
of $\C$ into a direct sum of full Abelian subcategories such that
$\otimes$ maps $\C_g\times \C_h$ to $\C_{gh}$ for all $g,h\in G$. 
Note that $\C_e$, where $e$ is the identity element of $G$, 
is the fusion subcategory of $\C$. In this paper we consider only faithful
gradings, i.e., such that $\C_g\neq 0$ for all $g\in G$. 
In this case one has $\FPdim(\C) =  |G| \FPdim(\C_e)$, see \cite{ENO}.

A right {\em $\C$-module category} is a category $\M$ together with 
an exact bifunctor $\ot: \M \times \C \to \M$ and a natural family 
of isomorphisms $M \ot (X \ot Y) \cong (M \ot X) \ot Y$ and
$M \ot \be \cong M,\, X,Y\in \C, \, M\in \M$ satisfying certain coherence 
conditions. See \cite{O1} for details and for definitions of 
$\C$-module functors and their natural transformations. 
The category of $\C$-module functors between right $\C$-module categories
$\M_1$ and $\M_2$ will be denoted $\Fun_\C(\M_1,\, \M_2)$.

For two $\C$-module categories $\M_1$ and $\M_2$ 
their {\em direct sum} $\M_1 \oplus \M_2$ has
an obvious $\C$-module category structure. A module category is 
{\em indecomposable} if it is not equivalent to a
direct sum of two non-trivial module categories.
It was shown in \cite{O1} that $\C$-module categories are completely 
reducible, i.e., given a $\C$-module subcategory $\N$ of a
$\C$-module category $\M$ there is a unique $\C$-module subcategory
$\N'$ of $\M$ such that $\M =\N \oplus \N'$. Consequently,
any $\C$-module category $\M$ has a unique, up to a permutation
of summands, decomposition $\M =\oplus_{x\in S}\,\M_x$
into a direct sum of indecomposable $\C$-module categories.

A theorem due to V.~Ostrik \cite{O1} states that any right $\C$-module
category is equivalent to the category of left modules over some algebra
in $\C$. Namely, given a non-zero object $V$ of an indecomposable
$\C$-module category $\M$, the internal Hom $\underline{\Hom}(V, V)$
defined by a natural isomorphism
\begin{equation}
\label{int Hom}
\Hom_\M(X\ot V,\, V) \cong \Hom_\C(X,\, \underline{\Hom}(V, V)),\qquad X\in \C,
\end{equation}
is an algebra in $\C$ and the category of its left modules in $\C$
is equivalent to $\M$. 

An important fact in the theory of module categories is that the category
$\C^*_\M: = \Fun_\C(\M,\, \M)$ of $\C$-module endofunctors of an indecomposable 
$\C$-module category  $\M$ (called the {\em dual} of $\C$ with respect to $\M$)
is also a fusion category and 
\[
\FPdim(\C^*_\M) = \FPdim(\C).
\] 
Furthermore, $\M$ is an indecomposable left $\C^*_\M$-module category.
If $\M$ is the category of left modules over an algebra $A$ in $\C$ then
$\C^*_\M$ is equivalent to the fusion category of $A$-bimodules in $\C$
with the tensor product $\ot_A$, see \cite{O1}.

Following M.~M\"uger \cite{Mu} we will say that two fusion categories
$\C$ and $\D$ are {\em Morita equivalent} if there is a right $\C$-module category 
$\M$ such that $\D \cong \C^*_\M$. It was shown in \cite{Mu} that the
above relation is indeed an equivalence.  For a fixed $\M$ the assignment
\begin{equation}
\label{ module cat coorrespond}
\N \mapsto \Fun_\C(\M,\, \N)
\end{equation}
establishes an equivalence between $2$-categories of right $\C$-module categories
and right $\C^*_\M$-module categories.

Define a {\em rank} of a semisimple category $\mathcal{A}$ 
to be the number of equivalence classes of simple objects in $\mathcal{A}$. 
A fusion category of rank $1$ is equivalent to the category $\Vec$ of 
$k$-vector spaces.  A module category of rank $1$ over a fusion category $\C$
is the same thing as a {\em fiber functor}, i.e., a tensor functor $F: \C\to \Vec$.
From such a functor $F$  one obtains a semisimple Hopf algebra $H:=\End(F)$
such that $\C$ is equivalent to the category  $\Rep(H)$ 
of finite-dimensional representations of $H$ \cite{U}.

A fusion category is called {\em pointed} if every its simple object
is invertible.  Every pointed fusion category is equivalent to a category
$\Vec_G^\omega$, where $G$ is a finite group and $\omega \in Z^3(G,\, k^\times)$
is a $3$-cocycle. By definition, the latter is the category of $G$-graded vector 
spaces with the associativity constraint given by $\omega$. 
The simple objects of  $\Vec_G^\omega$ are $1$-dimensional 
$G$-graded vector spaces which will be denoted by $g,\, g\in G$. Note that
the rank  and Frobenius-Perron dimension of $\Vec_G^\omega$ are equal to $|G|$.

\begin{definition}
\label{ptd module}
Let $\C$ be a fusion category and let $\M$ be an indecomposable $\C$-module category.
We will say that $\M$ is {\em pointed} if $\C^*_\M$ is pointed. 
\end{definition}

\begin{lemma}
\label{homogen}
Let $\C$ be a fusion category and let $\D \subseteq \C$ be a fusion subcategory.
Let $\M$ be a pointed $\C$-module category and let $\M =\oplus_{x\in S}\, \M_x$
be its decomposition into a direct sum of indecomposable $\D$-module categories.
Then $\M_x\cong \M_y$ as $\D$-module categories for all $x,y\in S$.
\end{lemma}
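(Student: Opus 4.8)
The plan is to use the pointed dual category $\C^*_\M$ as a source of module autoequivalences of $\M$ and to show that these act transitively on the indecomposable $\D$-module summands. Since $\M$ is pointed, $\C^*_\M = \Fun_\C(\M,\M)$ is a pointed fusion category; let $\Gamma$ denote the group of isomorphism classes of its simple objects. Each $g\in\Gamma$ is an invertible object of $\C^*_\M$, that is, a $\C$-module equivalence $F_g\colon \M\to\M$, and composition of these functors realizes the group law of $\Gamma$. Recall from the discussion preceding the lemma that $\M$ is an indecomposable left $\C^*_\M$-module category, the action being evaluation $g\cdot m = F_g(m)$.

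Next I would establish that $\Gamma$ acts transitively on the set of isomorphism classes of simple objects of $\M$. Because every simple object of $\C^*_\M$ is invertible, each $F_g$ carries simple objects of $\M$ to simple objects, so the $\Gamma$-orbits partition the simple objects of $\M$. The additive completion of a single orbit is a $\C^*_\M$-module subcategory (as every object of $\C^*_\M$ is a direct sum of elements of $\Gamma$, which preserve the orbit), and two distinct orbits would exhibit $\M$ as a nontrivial direct sum of $\C^*_\M$-module categories. Indecomposability of $\M$ over $\C^*_\M$ therefore forces a single orbit, which is exactly transitivity. I expect this transitivity step, resting on the interplay between pointedness and indecomposability over $\C^*_\M$, to be the crux of the argument.

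Finally I would restrict scalars along the inclusion $\D\subseteq\C$. Every $F_g$ is a $\C$-module equivalence, hence a fortiori an equivalence of $\D$-module categories, so it maps indecomposable $\D$-module summands to indecomposable $\D$-module summands and thereby permutes the summands $\M_x$. Given $x,y\in S$, choose simple objects $m_x\in\M_x$ and $m_y\in\M_y$; by the transitivity just proved there is $g\in\Gamma$ with $F_g(m_x)\cong m_y$. Since $F_g$ is a $\D$-module equivalence sending the summand containing $m_x$ to the summand containing its image, it restricts to an equivalence $\M_x\cong\M_y$ of $\D$-module categories, which completes the proof.
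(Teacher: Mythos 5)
Your proof is correct and follows essentially the same route as the paper: both rest on the fact that $\M$ is indecomposable over the pointed dual $\C^*_\M$, so the invertible objects of $\C^*_\M$ act transitively (the paper states this for the set $S$ of summands directly, you prove it for simple objects of $\M$ and then descend to $S$), and both conclude by restricting the resulting $\C$-module autoequivalences to $\D$-module equivalences permuting the $\M_x$. Your write-up simply fills in the details the paper leaves implicit, including the orbit-decomposition argument for transitivity.
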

\begin{proof}
Any $\C$-module autoequivalence of $\M$ induces a permutation of $S$.
Since $\M$ is indecomposable as a module category over the pointed
category $\C^*_\M$, the  simple objects of $\C^*_\M$ act transitively on $S$.
Hence, for any pair $x,y\in S$  there is a $\C$-module autoequivalence 
$F:\M\to \M$ that maps $\M_x$ to $\M_y$.
\end{proof}

\subsection{Group-theoretical fusion categories}
\label{sect: group-theor}

A fusion category that has a pointed module category is called
{\em group-theoretical} \cite{O2, ENO}.  Any such category  is equivalent
to the category of bimodules over an algebra in $\Vec_G^\omega$ for some
$G$ and $\omega \in Z^3(G,\, k^\times)$. 
Below we recall a description of group-theoretical categories from \cite{O2}.

Let $G$ be a finite group and $\omega \in Z^3(G, \, k^\times)$.
Equivalence classes of indecomposable right $\Vec_G^{\omega}$-module categories 
correspond to  pairs
$(H, \, \mu)$, where $H$ is a subgroup of $G$ such that
$\omega|_{H \times H \times H}$ is cohomologically trivial
and $\mu \in C^2(H, \, k^\times)$ is a $2$-cochain satisfying 
$\delta^2\mu = \omega|_{H \times H \times H}$. The corresponding
$\Vec_G^{\omega}$-module category is constructed as follows.
Given a pair $(H, \, \mu)$ as above define an algebra 
\[
R(H, \mu) =\bigoplus_{a\in H} a
\]
in $\Vec_G^\omega$ with the multiplication
\begin{equation}
\label{algebra R}
\bigoplus_{a,b\in H}\, \mu(a,\,b) \id_{ab} :  
R(H,\, \mu) \ot R(H,\, \mu) \to R(H,\, \mu).
\end{equation}
Let $\M(H,\, \mu)$ denote the category of left $R(H,\, \mu)$-modules in $\Vec_G^\omega$.
Any $\Vec_G^\omega$-module category is equivalent to some $\M(H,\, \mu)$.
The rank of $\M(H,\, \mu)$ is equal to the index of $H$ in $G$.
Two $\Vec_G^\omega$-module categories $\M(H,\, \mu)$ and $\M(H',\, \mu')$ 
are equivalent if and only if there is $g\in G$ such that
$H'=gHg^{-1}$  and $\mu$ and the $g$-conjugate of $\mu'$ differ by
a coboundary.

Let us analyze  $(\Vec_G^\omega)^*_{\M(H,\, \mu)}$-module categories 
using correspondence \eqref{ module cat coorrespond}. 
Let $(H_1,\, \mu_1)$ and $(H_2,\, \mu_2)$ 
be two pairs as above. The rank of the semisimple category
$\Fun_{\Vec_G^\omega}(\M(H_1,\, \mu_1),\, \M(H_2,\, \mu_2))$ 
was computed in \cite[Proposition 3.1]{O2}.  
Namely, for any $g\in G$ the group $H^g:= H_1\cap gH_2g^{-1}$ 
has a well-defined $2$-cocycle
\begin{gather*}
\mu^g(h,h'):= \mu_1(h,h') \mu_2(g^{-1}h'^{-1}g,\, g^{-1}h^{-1}g)
\omega(hh'g,\, g^{-1}h'^{-1}g,\, g^{-1}h^{-1}g)^{-1} \times \\
 \omega(h,\,h',\,g) \omega(h,\, h'g,\, g^{-1}h'^{-1}g),\qquad h,h'\in H^g.
\end{gather*}
The simple objects of $\Fun_{\Vec_G^\omega}(\M(H_1,\, \mu_1),\, \M(H_2,\, \mu_2))$
correspond to pairs $(Z,\, \pi)$, where $Z$ is a two-sided $(H_1,\, H_2)$-coset 
in $G$ and $\pi$ is an irreducible projective representation 
of $H^{g}$ with the Schur multiplier $\mu^{g},\, g\in Z$. 
In particular,
\begin{equation}
\label{rank 12}
\mbox{rank}(\Fun_{\Vec_G^\omega}(\M(H_1,\, \mu_1),\, \M(H_2,\, \mu_2)))
= \sum_{i\in H_1\backslash G  / H_2}\, m(g_i),
\end{equation}
where $\{g_i\}_{i\in H_1\backslash G  / H_2}$ is a set of representatives of 
two-sided $(H_1,\, H_2)$-cosets in $G$ and
$m(g_i)$ is the number of non-equivalent irreducible projective representations
of the group $H^{g_i}$ with the Schur multiplier $\mu^{g_i}$.  Note that
$m(g_i)$ is independent from the choice of a coset representative $g_i$.

\begin{remark}
\label{rank consequences}
Let us note several consequences of the rank formula \eqref{rank 12}. 
\begin{enumerate}
\item[(i)] The category $(\Vec_G^\omega)_{\M(H,\, \mu)}^*$ is equivalent
to the representation category of a Hopf algebra  if and only if   there
is pair $(K,\, \nu)$, where $K$ is a subgroup of $G$  and $\mu\in H^2(K,\, k^\times)$  
such that $\omega|_{K\times K\times K}$ 
is trivial, $HK = G$, and $\mu\nu^{-1}|_{H \cap K}$ is
non-degenerate. The latter condition means that the group algebra of $H \cap K$
twisted by a cocycle representing $\mu\nu^{-1}|_{H \cap K}$ is simple.

Moreover, the conjugacy classes of pairs $(K,\, \nu)$ with the above properties 
parameterize fiber functors of the category $(\Vec_G^\omega)_{\M(H,\, \mu)}^*$.
\item[(ii)]  Simple objects of $(\Vec_G^\omega)_{\M(H,\, \mu)}^*$
correspond to pairs  $(Z,\, \pi)$, where $Z$ is a two-sided 
coset of $H$ in $G$ and $\pi$ is an irreducible projective representation 
of $H\cap H^g$ with the Schur multiplier $\mu^{g},\, g\in Z$.
The  Frobenius-Perron dimension of the simple object
corresponding to $(Z,\, \pi)$ is $\deg(\pi) (|Z|/|H|)$. 
\item[(iii)] The pointed $(\Vec_G^\omega)_{\M(H,\, \mu)}^*$-module categories
are 
\[
\Fun_{\Vec_G^\omega}(\M(H,\, \mu),\, \M(N,\, \nu)), 
\]
where $N$ is a normal Abelian subgroup of $G$ such that $\omega|_{N\times N\times N}$ 
is trivial and $\nu\in H^2(N,\, k^\times)$ is a $G$-invariant cohomology class, 
see \cite{Na}.
\end{enumerate}
\end{remark}

\subsection{Group actions on fusion categories and equivariantization}
\label{equivar}

Let us recall the following well known construction (see \cite{AG, G}):
given a fusion category $\C$ equipped with an action of a finite group $G$
one defines a new fusion category, namely {\em the category of $G$-equivariant
objects\,} of $\C$, also known as the  {\em equivariantization\,} of $\C$.
  
Let $\C$ be a fusion category. Consider the category $\underline{\Aut}_\ot(\C)$, whose objects are
tensor auto-equivalences of $\C$ and whose morphisms are isomorphisms of tensor functors.
The category $\underline{\Aut}_\ot(\C)$ has an obvious structure of monoidal category,
in which the tensor product is the composition of tensor functors.  

For a finite group $G$ let $\mbox{Cat}(G)$ denote the monoidal category
whose objects objects are elements of $G$, the only morphisms are the identities, 
and the tensor product is given by multiplication in $G$. 

\begin{definition} \label{action}
An {\em action} of a group $G$ on a fusion category $\C$ is a monoidal functor
\[
\mbox{Cat}(G)\to \underline{Aut}_\ot(\C) : g \mapsto T_g.
\]
In this situation we also say that $G$ {\em acts} on $\C$.
\end{definition} 

Let $G$ be a finite group acting on a fusion category $\C$. For any $g\in G$ let $T_g\in 
\underline{\Aut}_\ot(\C)$ be the corresponding functor and for any $g,h\in G$ let $\gamma_{g,h}$ be
the isomorphism $T_g\circ T_h\simeq T_{gh}$ that defines the
monoidal structure on the functor $\mbox{Cat}(G)\to \underline{\Aut}_\ot(\C)$. 

\begin{definition}
\label{Gequiv object}
A {\em $G$-equivariant object\,} in $\C$ is
an object $X$ of $\C$ together with isomorphisms 
$u_g: T_g(X)\simeq X, g\in G,$ such that the diagram
\begin{equation*}
\label{equivariantX}
\xymatrix{T_g(T_h(X))\ar[rr]^{T_g(u_h)} \ar[d]_{\gamma_{g,h}(X)
}&&T_g(X)\ar[d]^{u_g}\\ T_{gh}(X)\ar[rr]^{u_{gh}}&&X}
\end{equation*}
commutes for all $g,h\in G$.
One defines morphisms of equivariant objects to be morphisms in $\C$ commuting with $u_g,\; g\in G$.
\end{definition}

The {\em category of $G$-equivariant objects of $\C$}, or {\em equivariantization},
will be denoted by $\C^G$. 
It has an obvious  structure of a fusion  category and a forgetful tensor functor 
$\mbox{Forg}: \C^G\to \C$.

\begin{remark}
\label{fiber}
If $\C$ has a fiber functor $F: \C \to \Vec$ then $F\circ \mbox{Forg}$
is a fiber functor of $\C^G$. Thus, if $\C$ is a representation category
of a Hopf algebra then so is $\C^G$.
\end{remark}

\begin{example}
\label{crossed product}
Let $G,\,K$ be finite groups such that $G$ acts on $K$ by automorphisms.
Then $G$ acts on the category $\Rep(K)$ of representations of $K$
and $\Rep(K)^G \cong \Rep(K \rtimes G)$.  
\end{example}


\subsection{Tambara-Yamagami categories}
\label{sect: definition of TY}

In \cite{TY} D.~Tambara and S.~Yamagami completely classified all
$\mathbb{Z}/2\mathbb{Z}$-graded fusion categories in which all but one
of simple objects are invertible. They showed that any such category
$\TY(A,\chi, \tau)$ is determined, up to an equivalence, by a
finite Abelian group $A$, an isomorphism class of a  
non-degenerate symmetric bilinear form
$\chi: A\times A \to k^\times$, and a number $\tau\in k$ such that
$\tau^2= |A|^{-1}$. The category $\TY(A,\chi, \tau)$ is described as
follows. It is a skeletal category (i.e., such that 
isomorphic objects are equal) with simple objects $a, a\in
A$, and $m$  and tensor product
\[
a\ot b =a+b,\quad a\ot m = m,\quad m \ot a=m,\quad m \ot m
=\oplus_{a\in A}\, a,
\]
for all $a, b\in A$ and the unit object $0\in A$.
The associativity constraints 
\[
\alpha_{x,y,z} : (x\ot y)\ot z \cong  x \ot (y \ot z),
\]
where $x,y,z$ are objects of $\TY(A,\chi, \tau)$, are given by
\begin{eqnarray*}
\alpha_{a, b, c} &=& \id_{a+b+c},  \qquad \\
\alpha_{a, b, m} &=& \id_{m}, \\
\alpha_{a, m, b} &=& \chi(a,b)\,\id_{m}, \qquad \\
\alpha_{m, a, b} &=& \id_{m}, \\
\alpha_{a, m, m} &=& \oplus_{b\in A}\, \id_{b}, \qquad \\
\alpha_{m, a, m} &=& \oplus_{b\in A}\, \chi(a,b)\,\id_{b}, \\
\alpha_{m, m, a} &=& \oplus_{b\in A}\, \id_{b}, \qquad \\
\alpha_{m, m, m} &=& \oplus_{a,b\in A}\, \tau\chi(a,b)^{-1}\,\id_{m}.
\end{eqnarray*}
The unit constraints are the identity maps.
The category $\TY(A,\chi,\tau)$ is rigid with $a^*=-a$ and $m^*=m$.

The Frobenius-Perron dimensions of simple objects of
$\TY(A,\chi,\tau)$ are $\FPdim(a)=1,\, a\in A$, and
$\FPdim(m)=\sqrt{|A|}$. We have $\FPdim(\TY(A,\chi,\tau))=2|A|$.

\begin{definition}
\label{hyperbolic}
Let $A$ be an Abelian group and let   $\chi: A \times A \to k^\times$ be 
a non-degenerate symmetric bilinear form on it.  The form $\chi$ is called
{\em hyperbolic} if there are subgroups $L,\, L'$ of $A$ such that
$A=L \times L'$ and 
\[
\chi|_{L \times L} = \chi|_{L' \times L'}  = 1.
\]
Any subgroup $L$ of $A$ for which there is $L'$ with the above properties is
called {\em Lagrangian} (with respect to $\chi$).
\end{definition}

\begin{remark}
\label{Tambara fiber}
Suppose that $|A|$ is odd. It was shown by D.~Tambara in \cite{T} 
that $\TY(A,\chi,\tau)$ admits a fiber functor (i.e., $\TY(A,\chi,\tau)$
is equivalent to the representation category of a semisimple Hopf algebra)
if and only if $\tau^{-1}$ is a positive integer and $\chi$ is hyperbolic.

In the special case when $A=\mathbb{Z}/k\mathbb{Z}\times \mathbb{Z}/k\mathbb{Z},
\, k\geq 2$, the corresponding semisimple Hopf algebras were explicitly described
by G.~Kac and V.~Paljutkin \cite{KP}.
\end{remark}

\begin{proposition}
\label{orthogonal group acts}
Let $\Aut(A, \chi)$ be the group of automorphisms of $A$ preserving the form
$\chi$.
There is an action $g\mapsto T_g$ of $\Aut(A, \chi)$ on $\TY(A,\chi,\tau)$,
where
\[
T_g(A) = g(a),\quad T_g(m)=m,\qquad a\in A,\, g\in \Aut(A, \chi),
\]
with the tensor structure of $T_g$ given by identity morphisms.
\end{proposition}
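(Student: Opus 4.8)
The plan is to produce, for each $g\in\Aut(A,\chi)$, an explicit tensor autoequivalence $(T_g, J^g)$ of $\TY(A,\chi,\tau)$ and then to verify that $g\mapsto T_g$ respects composition, thereby assembling a monoidal functor $\text{Cat}(\Aut(A,\chi))\to\underline{\Aut}_\ot(\TY(A,\chi,\tau))$. First I would define $T_g$ as a $k$-linear additive functor: on simple objects it acts by $a\mapsto g(a)$ and $m\mapsto m$, and on morphisms by relabeling (sending $\lambda\,\id_a$ to $\lambda\,\id_{g(a)}$ and fixing $\End(m)$). Since $g$ is a bijection of $A$ with $g(0)=0$, this is a well-defined equivalence with inverse $T_{g^{-1}}$ that fixes the unit object.

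Next I would equip $T_g$ with the tensor structure $J^g_{X,Y}\colon T_g(X)\ot T_g(Y)\to T_g(X\ot Y)$. Because $g$ is a group homomorphism, $g(a)+g(b)=g(a+b)$, so $T_g(a)\ot T_g(b)=T_g(a\ot b)$ on the nose; the mixed products involving $m$ are likewise preserved strictly, and for $m\ot m$ the objects $\oplus_{c\in A}c$ on both sides coincide in the skeletal category. Thus every component of $J^g$ can be taken to be the identity, as asserted in the statement.

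The substantive step is verifying the tensor-functor coherence, namely that for all objects $X,Y,Z$ the hexagon relating the associativity constraints through $J^g$ commutes. Since $J^g=\id$, this collapses to the identity $T_g(\alpha_{X,Y,Z})=\alpha_{T_gX,T_gY,T_gZ}$. Running through the eight families of constraints, the $\chi$-free cases $\alpha_{a,b,c}$, $\alpha_{a,b,m}$, $\alpha_{m,a,b}$, $\alpha_{a,m,m}$, $\alpha_{m,m,a}$ are identities on both sides. The remaining three carry a $\chi$-factor and are where the hypothesis is used: for $\alpha_{a,m,b}=\chi(a,b)\,\id_m$ the two sides agree iff $\chi(g(a),g(b))=\chi(a,b)$, and for $\alpha_{m,a,m}=\oplus_b\chi(a,b)\,\id_b$ applying $T_g$ relabels the $b$-summand to $g(b)$, so the $c$-component becomes $\chi(a,g^{-1}(c))$, which must equal $\chi(g(a),c)$ — again exactly $\chi$-invariance. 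The same reindexing handles $\alpha_{m,m,m}=\oplus_{a,b}\tau\chi(a,b)^{-1}\,\id_m$. Hence the single hypothesis $g\in\Aut(A,\chi)$ is precisely what makes $(T_g,J^g)$ a tensor functor; this bookkeeping with the permuted summands of $m\ot m$ is the main (and essentially only) obstacle.

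Finally I would check that $g\mapsto T_g$ is itself monoidal. On objects $T_g\circ T_h$ and $T_{gh}$ agree since $g(h(a))=(gh)(a)$, so I take the structure isomorphisms $\gamma_{g,h}\colon T_g\circ T_h\Rightarrow T_{gh}$ to be identities; their compatibility with the $J$'s and the associativity (cocycle) condition for the $\gamma$'s both hold trivially, everything being an identity and multiplication in $\Aut(A,\chi)$ being associative. This produces the claimed action $g\mapsto T_g$ of $\Aut(A,\chi)$ on $\TY(A,\chi,\tau)$.
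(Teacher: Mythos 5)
Your proof is correct and is essentially the paper's argument written out in full: the paper's one-line proof rests exactly on the observation $T_g(\alpha_{x,y,z})=\alpha_{T_gx,T_gy,T_gz}$, which is the case-by-case verification (with the reindexing of summands of $m\ot m$) that you carry out, the $\chi$-invariance of $g$ entering in precisely the three constraints you identify. The additional check that $g\mapsto T_g$ is monoidal with identity structure maps is a harmless elaboration the paper leaves implicit.
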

\begin{proof}
This follows directly from the definition of the associativity constraints
in the Tambara-Yamagami category and an observation that 
$\alpha_{T_g(x),T_g(y), T_g(z)} = T_g (\alpha_{x,y,z})$
for all simple objects $x,y,z$ in $\TY(A,\chi,\tau)$ and all $g\in \Aut(A, \chi)$.
\end{proof}

\section{Crossed product fusion categories and their module categories}
\label{Sect: cross}


In this Section we construct a fusion category 
$\C \rtimes G$  dual to the equivariantization category $\C^G$ 
(see Proposition~\ref{CG duality} below)
with respect to the module category $\C$ and 
derive a criterion for $\C^G$ and $\C \rtimes G$
to be group-theoretical.

For a pair of Abelian categories $\mathcal{A}_1$,  $\mathcal{A}_2$,
let $\mathcal{A}_1 \bt \mathcal{A}_2$ denote their Deligne's
tensor product \cite{D}. 

\subsection{Definition of a crossed product fusion category}

Let $\C$ be a fusion category.
Fix a finite group $G$ and an  action 
$\mbox{Cat}(G)\to \underline{Aut}_\ot(\C): g \mapsto T_g$.

\begin{definition}
A {\em crossed product} category $\C \rtimes G$ is defined as follows. 
We set $\C \rtimes  G = \C \bt \Vec_G$ as an Abelian category, and define
a tensor product by
\begin{equation}
(X \bt g) \ot (Y \bt h): = (X \ot T_g(Y)) \bt gh,\qquad X,Y \in \C,\quad g,h\in G.
\end{equation}
The unit object is $\be \bt e$ and the associativity and unit constraints
come from those of $\C$.
\end{definition}

Note that $\C \rtimes G$ is a $G$-graded fusion category,
\[
\C \rtimes G =\bigoplus_{g\in G}\, (\C \rtimes G)_g,\qquad \mbox{ where }     
(\C \rtimes G)_g= \C \ot (\be \bt g).
\]
In particular, $\C \rtimes G$ contains $\C =\C \ot (\be \bt e)$ as a fusion subcategory.

We have $\FPdim(\C \rtimes G) = |G| \FPdim(\C)$.

There is a left  $(\C \rtimes G)$-module category structure on $\C$ given by
\[
V\ot (X \bt g) := T_{g^{-1}}(V \ot X),\qquad V,X\in \C,\, g\in G,
\]
with the associativity constraint 
\begin{eqnarray*}
\alpha_{V, X\bt g , Y\bt h} : V \ot ((X \bt g) \ot (Y \bt h))
&=& T_{h^{-1}g^{-1}}(V \ot (X \ot T_g(Y)))\\
&\cong& T_{h^{-1}}( T_{g^{-1}}(V \ot X) \ot Y) \\
&=&  T_{g^{-1}}(V \ot X) \ot (Y\bt h)\\
&=&  (V \ot (X \bt g)) \ot (Y\bt h). 
\end{eqnarray*}

\begin{proposition}
\label{CG duality}
We have $(\C \rtimes G)^*_\C \cong \C^G$, i.e., the categories 
$(\C \rtimes G)$ and $\C^G$ are Morita equivalent.
\end{proposition}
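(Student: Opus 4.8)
The plan is to construct an explicit tensor equivalence $\Phi: (\C\rtimes G)^*_\C \to \C^G$ by unwinding what it means for a functor $F:\C\to\C$ to be a $(\C\rtimes G)$-module endofunctor, and to show that this data is precisely that of a $G$-equivariant object. Recall that $(\C\rtimes G)^*_\C = \Fun_{\C\rtimes G}(\C,\C)$, the module structure being $V\ot(X\bt g)=T_{g^{-1}}(V\ot X)$. A module endofunctor $F$ comes equipped with natural isomorphisms $F(V\ot(X\bt g))\cong F(V)\ot(X\bt g)$, i.e. $F(T_{g^{-1}}(V\ot X))\cong T_{g^{-1}}(F(V)\ot X)$, subject to a coherence axiom involving the associativity constraint $\alpha_{V,X\bt g,Y\bt h}$ displayed above.

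First I would restrict attention to the fusion subcategory $\C=\C\ot(\be\bt e)$ of $\C\rtimes G$. On this subcategory the action is $V\ot(X\bt e)=V\ot X$, so $F$ becomes a module endofunctor of $\C$ viewed as the regular right $\C$-module category. Since the module endofunctors of the regular module are exactly the left tensor multiplications (so that $\C^*_\C\cong\C$), there is a canonical isomorphism $F\cong Z\ot(-)$ with $Z:=F(\be)$. Next I would feed in the components $\be\bt g$: because $V\ot(\be\bt g)=T_{g^{-1}}(V)$, the module structure of $F$ supplies natural isomorphisms $F(T_{g^{-1}}(V))\cong T_{g^{-1}}(F(V))$; evaluating at $V=\be$ and using that each $T_g$ is a tensor functor (so $T_g(\be)\cong\be$ and $T_g(Z\ot V)\cong T_g(Z)\ot T_g(V)$) produces isomorphisms $u_g:T_g(Z)\cong Z$. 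Thus to every object $F$ I attach the pair $(Z,\{u_g\}_{g\in G})$ and set $\Phi(F)=(Z,\{u_g\})$.

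The crux is to check that the single coherence axiom for the module functor $F$, applied to the product $(X\bt g)\ot(Y\bt h)=(X\ot T_g(Y))\bt gh$ together with the associativity constraint $\alpha_{V,X\bt g,Y\bt h}$, collapses exactly to the commuting square of Definition~\ref{Gequiv object}, whose vertical arrows are $\gamma_{g,h}(Z)$ and $u_{gh}$ and whose horizontal arrows are $T_g(u_h)$ and $u_g$. This is the step I expect to be the main obstacle, since it requires carefully transporting the coherence diagram through the identification $F\cong Z\ot(-)$ and through the definition of $\alpha$, which is precisely where the data $\gamma_{g,h}$ of the $G$-action enters; once this is verified, $(Z,\{u_g\})$ is genuinely a $G$-equivariant object. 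Conversely, given $(Z,\{u_g\})\in\C^G$, the functor $Z\ot(-)$ with module structure reassembled from the $u_g$ and the tensor structures of the $T_g$ defines an object of $(\C\rtimes G)^*_\C$, furnishing an essential inverse to $\Phi$.

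Finally I would check that $\Phi$ is fully faithful and monoidal. A morphism $F\to F'$ of module endofunctors is a module natural transformation, which under $F\cong Z\ot(-)$ corresponds to a morphism $Z\to Z'$ in $\C$, and the compatibility with the $\be\bt g$-components is exactly the requirement that this morphism intertwine the $u_g$, i.e. that it be a morphism in $\C^G$; hence $\Phi$ is fully faithful, and together with essential surjectivity it is an equivalence of categories. For monoidality, the tensor product in $(\C\rtimes G)^*_\C$ is composition of functors, and $(Z\ot(-))\circ(Z'\ot(-))=(Z\ot Z')\ot(-)$ with equivariant structure $T_g(Z\ot Z')\cong T_g(Z)\ot T_g(Z')\xrightarrow{u_g\ot u'_g}Z\ot Z'$, which is precisely the tensor product $(Z,u)\ot(Z',u')$ in $\C^G$; this makes $\Phi$ a tensor equivalence. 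As a consistency check, both sides have Frobenius-Perron dimension $|G|\,\FPdim(\C)$, in agreement with $\FPdim((\C\rtimes G)^*_\C)=\FPdim(\C\rtimes G)$. This yields $(\C\rtimes G)^*_\C\cong\C^G$ and hence the asserted Morita equivalence.
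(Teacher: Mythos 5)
Your proposal is correct and follows essentially the same route as the paper's own proof: restrict a $(\C\rtimes G)$-module endofunctor $F$ of $\C$ to the subcategory $\C\subset\C\rtimes G$ to identify $F$ with tensoring by $Z=F(\be)$, then observe that the remaining $(\C\rtimes G)$-module functor data is exactly a $G$-equivariant structure $u_g:T_g(Z)\cong Z$ on $Z$. The paper compresses the coherence verification and the monoidality of the correspondence into ``it is straightforward to check''; your write-up simply makes those steps explicit.
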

\begin{proof}
Let $F: \C \to \C$ be a $(\C \rtimes G)$-module functor.
In particular, $F$ is a $\C$-module functor, hence $F(V) =X\ot V$
for some $X$ in $\C$. It is straightforward to check that 
a $(\C \rtimes G)$-module functor structure on
the latter functor is the same thing as a $G$-equivariant object
structure on $X$.
\end{proof}

\subsection{A criterion for a crossed product fusion category to be group theoretical}

Let $\C$ be a fusion category, let $t \in \underline{\Aut}_\ot(\C)$ be a tensor
autoequivalence of $\C$, and let $\M$ be a $\C$-module category.

Let $\M^t$  denote the module category
obtained from $\M$ by twisting the multiplication by means of $t$, i.e.,
by defining a new action of $\C$: 
\[
M \ot^t X :=  M \ot t(X),
\]
for all objects $M$ in $\M$ and $X$ in $\C$.
If $A$ is an algebra in $\C$ such that $\M$ is equivalent to the category of 
$A$-modules in $\C$ then $\M^t$ is equivalent to the category of
$t(A)$-modules in $\C$.

\begin{lemma}
\label{dual preserved}
Let $\M$ be a $\C$-module category and let $t \in \underline{\Aut}_\ot(\C)$ be a tensor
autoequivalence of $\C$. Then $\C^*_{\M} \cong \C^*_{\M^t}$.
In particular, if $\M$ is pointed, then so is~$\M^t$.
\end{lemma}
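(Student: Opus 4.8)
The plan is to exhibit an explicit equivalence of fusion categories $\C^*_\M \cong \C^*_{\M^t}$ by transporting module endofunctors along the twist. The key observation is that the underlying abelian category of $\M^t$ is literally the same as that of $\M$; only the $\C$-action has been altered via $M \ot^t X = M \ot t(X)$. So a functor $F:\M\to\M$ underlies both a candidate $\C$-module endofunctor of $\M$ and of $\M^t$, and the whole content is the compatibility of the module structure with the two different actions.

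First I would set up the assignment on objects. Given a $\C$-module endofunctor $(F, s)$ of $\M$, where $s_{M,X}: F(M\ot X)\xrightarrow{\sim} F(M)\ot X$ is the module structure, I would define a module structure on the same functor $F$ viewed as an endofunctor of $\M^t$ by
\[
s^t_{M,X} : F(M\ot^t X) = F(M\ot t(X)) \xrightarrow{s_{M,t(X)}} F(M)\ot t(X) = F(M)\ot^t X.
\]
In other words, I simply precompose the module constraint with the action of $t$ on the second variable. I would then check that the module-functor coherence axiom for $s^t$ with respect to the twisted associativity constraint of $\M^t$ follows from the coherence axiom for $s$ together with the fact that $t$ is a \emph{tensor} autoequivalence — its tensor structure $t(X\ot Y)\cong t(X)\ot t(Y)$ is exactly what is needed to match the twisted associativity isomorphisms. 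This gives a functor $\Phi:\C^*_\M\to\C^*_{\M^t}$.

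Next I would show $\Phi$ is an equivalence of fusion categories. On morphisms, a module natural transformation between $(F,s)$ and $(F',s')$ is a natural transformation commuting with the $s$'s; since $s^t$ and $s'^t$ are obtained by the same precomposition, the very same natural transformation commutes with the twisted structures, so $\Phi$ is fully faithful. For essential surjectivity one runs the construction in reverse using $t^{-1}$ (available since $t$ is an autoequivalence), and I would note $\Phi$ respects the monoidal structure because composition of module endofunctors is unaffected by the bookkeeping twist — here the alternative and cleaner route is the algebra picture already recorded in the excerpt: if $\M \cong A\text{-mod}$ then $\M^t\cong t(A)\text{-mod}$, and since $t$ is a tensor equivalence the categories of $A$-bimodules and $t(A)$-bimodules are equivalent via $t$, whence $\C^*_\M\cong\C^*_{\M^t}$. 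The final sentence of the lemma is then immediate: if $\M$ is pointed, meaning $\C^*_\M$ is pointed, then $\C^*_{\M^t}\cong\C^*_\M$ is pointed too, so $\M^t$ is pointed.

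I expect the main obstacle to be the verification of the module-functor coherence axiom for $s^t$ against the twisted associator of $\M^t$: one must carefully insert the tensor structure isomorphisms $\gamma$ of $t$ in the right places and confirm the resulting hexagon/pentagon-type diagram commutes. This is the one genuinely computational point, though it is forced by naturality and the monoidal coherence of $t$; everything else is formal. For this reason I would favor closing the argument through the bimodule description, which packages these coherences into the already-cited fact that $\C^*_\M$ is the category of $A$-bimodules, reducing the claim to the transport of bimodules along the tensor equivalence $t$.
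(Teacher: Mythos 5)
Your primary construction is precisely the paper's proof: the paper defines $F^t(M)=F(M)$ with module structure $\gamma^t_{M,X}=\gamma_{M,t(X)}$ and then declares it ``straightforward to check'' that $F\mapsto F^t$ is a tensor equivalence between $\C^*_\M$ and $\C^*_{\M^t}$ --- exactly the coherence verification you single out as the one computational point. Your preferred alternative ending, via the bimodule picture, is \emph{not} what the paper does, but it is sound: it combines the fact recorded just before the lemma (that $\M\cong A\text{-mod}$ implies $\M^t\cong t(A)\text{-mod}$) with Ostrik's identification of $\C^*_\M$ with $A$-bimodules under $\ot_A$, and the observation that an exact tensor autoequivalence carries $A$-bimodules to $t(A)$-bimodules monoidally, since it preserves the coequalizer defining $\ot_A$. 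What this route buys is that the hexagon-type coherence bookkeeping is absorbed into these standard facts; what it costs is reliance on the algebra presentation of $\M$ (harmless, by Ostrik's theorem) and attention to the convention of whether $\M^t$ corresponds to $t(A)$ or $t^{-1}(A)$ --- a distinction immaterial here, as both are tensor autoequivalences and either choice yields the claimed equivalence, hence also the conclusion that $\M$ pointed implies $\M^t$ pointed.
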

\begin{proof}
Let $F: \M \to \M$ be a $\C$-module functor with the $\C$-module functor structure
given by
\[
\gamma_{M,X} : F(M \ot X)  \cong F(M)\ot X,\qquad M\in \M,\, X\in \C.
\]
Let $F^t: \M^t \to \M^t$ be a $\C$-module functor defined by 
$F^t (M)=F(M)$ with a $\C$-module functor structure
\[
\gamma^t_{M,X} : F^t(M \ot^t X)
=  F(M \ot t(X)) \xrightarrow{\gamma_{M, t(X)}} F(M)\ot t(X) =   F^t(M) \ot^t X.
\]
It is straightforward to check that $F\mapsto F^t$ is a tensor equivalence
between $\C^*_\M$ and $\C^*_{\M^t}$.
\end{proof}

Given an action $g\mapsto T_g$ of a group $G$ on $\C$ we will denote
by $\M^g$ the category $\M^{T_g}$. We have $\M^{gh} \cong(\M^g)^h,\, g,h\in G$, i.e.,
$G$ acts on the set  of indecomposable $\C$-module categories.

\begin{definition}
\label{G invariant module}
We will say that a $\C$-module category $\M$ is $G$-invariant
if $\M \cong \M^g$ for every $g\in G$.
\end{definition}

\begin{theorem}
\label{criterion for C times G to be gr th}
The category $\C \rtimes G$ is group-theoretical if and only if 
there exists a $G$-invariant pointed  $\C$-module category.
\end{theorem}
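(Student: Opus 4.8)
The plan is to use that being group-theoretical is a Morita-invariant property (it says $\C\rtimes G$ is Morita equivalent to a pointed category) together with the fact that $\C$ sits inside $\C\rtimes G$ as the trivial component of a faithful $G$-grading whose homogeneous pieces $\C\ot(\be\bt g)$ are obtained from $\C$ by tensoring with the invertible objects $\be\bt g$. For the direction $(\Leftarrow)$ I would start from a $G$-invariant pointed $\C$-module category $\M$ and set $\D:=\C^*_\M$, which is pointed by Definition~\ref{ptd module}. Lemma~\ref{dual preserved} gives $\C^*_{\M^g}\cong\C^*_\M=\D$ for every $g$, and the equivalences $\M\cong\M^g$ supplied by $G$-invariance transport the autoequivalences $T_g$ of $\C$ to autoequivalences of $\D$; the first step is to check that these assemble into a genuine action of $G$ on $\D$ in the sense of Definition~\ref{action}, i.e.\ that the coherence isomorphisms can be chosen to satisfy the cocycle condition for $\gamma_{g,h}$. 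Granting this, I would induce $\M$ along $\C\hookrightarrow\C\rtimes G$ and compute the dual of the induced module category to be $\D\rtimes G$ for the transported action; this is the analogue, for a general module category, of the identification $(\C\rtimes G)^*_\C\cong\C^G$ of Proposition~\ref{CG duality}. Since $\D$ is pointed, every simple object $d\bt g$ of the $G$-graded category $\D\rtimes G$ is invertible (with inverse $T_{g^{-1}}(d^{-1})\bt g^{-1}$), so $\D\rtimes G$ is itself pointed; in particular it is a fusion category, the induced module category is indecomposable, and $\C\rtimes G$ is Morita equivalent to a pointed category, hence group-theoretical.

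For the direction $(\Rightarrow)$, suppose $\C\rtimes G$ is group-theoretical and fix an indecomposable $(\C\rtimes G)$-module category $\N$ with $(\C\rtimes G)^*_\N$ pointed. Restricting the action to $\C\subseteq\C\rtimes G$, write $\N|_\C=\oplus_{x\in S}\M_x$ as a direct sum of indecomposable $\C$-module categories. Applying Lemma~\ref{homogen} to the pointed module category $\N$ and the fusion subcategory $\C$ shows that all the $\M_x$ are equivalent to a single indecomposable $\C$-module category $\M$, so $\N|_\C\cong\M^{\oplus n}$. The invertible objects $\be\bt g$ act on $\N$, and associativity of the module structure shows that tensoring by $\be\bt g$ intertwines the $\C$-action with its $T_g$-twist; this yields $\C$-module equivalences $(\N|_\C)^{g}\cong\N|_\C$, that is $(\M^{g})^{\oplus n}\cong\M^{\oplus n}$, whence $\M\cong\M^{g}$ by uniqueness of the decomposition into indecomposables, so $\M$ is $G$-invariant. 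It remains to prove that $\M$ is pointed, and here I would again use the dual computation: restriction of module endofunctors exhibits $\C^*_\M$ as the trivial (degree-$e$) component of a faithful $G$-grading on the pointed category $(\C\rtimes G)^*_\N$. The degree-$e$ component of a pointed graded category $\Vec_\Gamma^\omega$ is $\Vec_K^{\omega|}$ for the degree-$e$ subgroup $K$, hence pointed; therefore $\C^*_\M$ is pointed and $\M$ is a $G$-invariant pointed $\C$-module category.

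The main obstacle in both directions is the single dual computation, namely identifying the dual of the induced (respectively restricted) module category as the crossed product $\C^*_\M\rtimes G$, equivalently exhibiting $\C^*_\M$ as the degree-$e$ part of a $G$-grading on $(\C\rtimes G)^*_\N$. This requires carrying the structure isomorphisms $\gamma_{g,h}$ of the action and the twisting of Lemma~\ref{dual preserved} through the categories of module functors, and it is precisely where the hypothesis $\M\cong\M^{g}$ is essential: without $G$-invariance the transported data on $\C^*_\M$ does not close up into a $G$-action and the crossed product $\C^*_\M\rtimes G$ is not even defined. Everything else — the transitive permutation of the index set $S$ by the invertibles, the invertibility of the objects $d\bt g$, and the Morita-invariance of group-theoreticity — is routine given Proposition~\ref{CG duality} and the results already established.
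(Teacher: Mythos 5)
Your right-to-left direction has a genuine gap at exactly the step you defer (``Granting this''), and that step cannot in general be carried out. $G$-invariance of $\M$ only supplies equivalences $K_g\colon \M^g\to\M$ with no compatibility among them: the composite $c_{g,h}:=K_g\circ (K_h)^g\circ K_{gh}^{-1}$ (where $(K_h)^g$ denotes $K_h$ regarded as an equivalence $\M^{gh}\to\M^g$) is a $\C$-module autoequivalence of $\M$, i.e.\ an invertible object of $\D=\C^*_\M$, and it is in general nontrivial. Hence the transported autoequivalences $S_g$ of $\D$ satisfy $S_gS_h\cong S_{gh}$ only up to conjugation by $c_{g,h}$: you obtain a homomorphism from $G$ to tensor autoequivalences \emph{modulo inner ones}, and lifting this to an honest action in the sense of Definition~\ref{action} (coherent $\gamma_{g,h}$) meets obstruction classes valued in the group of invertibles of $\D$ and in $H^3(G,k^\times)$, which need not vanish. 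Correspondingly, the dual of the induced module category is in general \emph{not} $\D\rtimes G$: it is pointed, but its group of invertibles is an extension of $G$ by $\mathrm{Inv}(\D)$ that can be non-split, with a possibly nontrivial associator. The paper shows this whole step is unnecessary: choose the $K_g$ arbitrarily, equip $\N\boxtimes\Vec_G$ with the $(\C\rtimes G)$-module structure written directly in terms of the $K_g$, and observe that for each $h\in G$ and each $\C$-module equivalence $F\colon\N\to\N^h$ the formula $F_h(N\bt f)=K_{hf}FK_f^{-1}(N)\bt hf$ gives an invertible object of the dual; these $|G|\FPdim(\C)$ invertibles exhaust the Frobenius--Perron dimension, so the dual is pointed, and no coherent action on $\D$ is ever produced or needed.

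In the left-to-right direction your reduction (decompose $\N|_\C$, apply Lemma~\ref{homogen}, deduce $G$-invariance from $\M_{gx}=\M_x\ot(\be\bt g)$) agrees with the paper, but the final step --- pointedness of $\M_x$ --- rests on a false structural claim. A faithful $G$-grading on a pointed category $\Vec_\Gamma^\omega$ is the same thing as a surjective homomorphism $\Gamma\to G$, and no such surjection need exist. Concretely, take $\C=\Vec$ with $G=\mathbb{Z}/4\mathbb{Z}$ acting trivially, so $\C\rtimes G=\Vec_G$, and let $\N=\M(H,1)$ where $H$ is the subgroup of order~$2$. By Remark~\ref{rank consequences}(iii) the dual $(\Vec_G)^*_\N$ is pointed, and a direct computation of the four invertible $R(H,1)$-bimodules shows that its group of invertibles is $\mathbb{Z}/2\mathbb{Z}\times\mathbb{Z}/2\mathbb{Z}$ (this is the familiar Morita equivalence between $\Vec_{\mathbb{Z}/4\mathbb{Z}}$ and a twisted $\Vec_{(\mathbb{Z}/2\mathbb{Z})^2}$); since $(\mathbb{Z}/2\mathbb{Z})^2$ admits no surjection onto $\mathbb{Z}/4\mathbb{Z}$, the category $(\C\rtimes G)^*_\N$ carries no faithful $G$-grading at all, let alone one with trivial component $\C^*_{\M_x}=\Vec$. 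The underlying problem is that a $(\C\rtimes G)$-module endofunctor of $\N$ has no well-defined degree in $G$ once the $G$-action on the index set $S$ has nontrivial stabilizers: the permutations of $S\cong G/H'$ commuting with $G$ form $N_G(H')/H'$, not $G$. The paper replaces your grading by the restriction functor: by \cite[Proposition 5.3]{ENO} the tensor functor $(\C\rtimes G)^*_\N\to\bigoplus_{x,y\in S}\Fun_\C(\M_x,\M_y)$ is surjective, so every simple object of $\C^*_{\M_x}$ is a subobject of the image of an invertible object and is therefore itself invertible. That surjectivity argument is the missing step your proof needs in place of the grading.
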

\begin{proof}
Let $\M$ be a $(\C \rtimes G)$-module category and let $\M =\oplus_{x\in S}\, \M_x$ 
be a decomposition of $\M$ into a direct sum of indecomposable $\C$-module categories.
Note that $\M_{gx}:=\M_x \ot (\be\bt g),\, x\in S,\,g\in G,$ is a $\C$-module subcategory
of $\M$.  
This makes $S$ a transitive $G$-set. The functor $M \mapsto M \ot (\be\bt g)$
is a $\C$-module equivalence between  $(\M_{x})^g$ and $\M_{gx}$.

Suppose that $\M$ is pointed, then $\M_x\cong \M_y$ for all $x,y\in S$
by Lemma~\ref{homogen}. Thus, for any $x\in S$ the category 
$\M_x$ is $G$-invariant.

By \cite[Proposition 5.3]{ENO} there is a surjective tensor functor
\[
(\C \rtimes G)^*_\M \to \C^*_\M = \bigoplus_{x,y\in S}\, \Fun_\C(\M_x,\, \M_y).
\]
This means, in particular, that every simple object in $\C^*_{\M_x}\,(x\in S)$
is a subobject of the image of an invertible object in $(\C \rtimes G)^*_\M$
and, hence, is invertible. Thus, $\M_x$ is pointed.

To prove the converse implication, suppose that  $\N$ is a $G$-invariant 
pointed  $\C$-module category. Choose  right $\C$-module equivalences
$K_g : \N^g \cong \N ,\, g\in G$. We have natural isomorphisms
\[
K_g(N\ot X) \cong K_g(N)\ot T_g^{-1}(X),\qquad N\in \N,\, X\in \C.
\]
We equip $\M := \N \boxtimes \Vec_G$ with a 
$(\C \rtimes G)$-module category structure by setting
\[
(N \bt f) \ot (X \bt g) := K_{fg} K_f^{-1} (N \ot X) \bt fg,\qquad
N\in \N,\, X\in \C,\, f,g\in G.
\]
The associativity constraint of $\M$
is defined via the following isomorphisms:
\begin{eqnarray*}
(N \bt f) \ot ( (X \bt g) \ot (Y \bt h)  )
&= & K_{fgh} K_f^{-1} (N \ot (X \ot T_g(Y))) \bt fgh \\
&\cong & K_{fgh} K_f^{-1} ((N \ot X )\ot T_g(Y)) \bt fgh \\
&\cong & K_{fgh} K_{fg}^{-1} ( K_{fg} K_f^{-1}(N \ot X ) \ot Y)  \bt fgh \\
&= & ((N \bt f) \ot (X \bt g)) \ot (Y \bt h).
\end{eqnarray*}
Let us consider the category $(\C \rtimes G)^*_\M$. For any right
$\C$-module equivalence $F~:~\N \cong \N^h,\, h\in G,$ define a functor
$F_h: \M \to \M$ by
\[
F_h(N \bt f) = K_{hf} F K_f^{-1}(N)  \bt hf,\qquad N\in \N,\, f,h\in G.
\]
Then $F_h$ has a structure of a right  $(\C \rtimes G)$-module
endofunctor of $\M$:
\begin{eqnarray*}
F_h((N \bt f) \ot (X \bt g))
&=& K_{hfg} F K_f^{-1}(N \ot X) \bt hfg\\
&\cong& K_{hfg} F( K_f^{-1}(N) \ot T_f(X)) \bt hfg\\
&\cong& K_{hfg} K_{hf}^{-1} K_{hf} (F(K_f^{-1}(N)) \ot T_{hf}(X)) \bt hfg\\
&\cong& K_{hfg} K_{hf}^{-1} (K_{hf} F K_f^{-1}(N) \ot X) \bt hfg\\
&=& ((K_{hf} F K_f^{-1}(N) \ot X) \bt hf) \ot (X \bt g) \\
&=&  F_h(N \bt f)  \ot (X \bt g).
\end{eqnarray*}
Each functor $F_h$ is an equivalence. Since there are $|G|\FPdim(\C)$
such functors,  we conclude that  $(\C \rtimes G)^*_\M$
is spanned by invertible objects, i.e., it is pointed.
\end{proof}

\begin{corollary}
\label{criterion for CG to be gr th}
The category $\C^G$ is group-theoretical if and only if there exists 
a $G$-invariant pointed  $\C$-module category.
\end{corollary}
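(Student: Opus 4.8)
The plan is to deduce Corollary~\ref{criterion for CG to be gr th} directly from Theorem~\ref{criterion for C times G to be gr th} together with Proposition~\ref{CG duality}, rather than to reprove anything from scratch. The key observation is that Morita equivalence of fusion categories preserves the property of being group-theoretical, so it suffices to relate $\C^G$ to $\C \rtimes G$ via the Morita equivalence already established.

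\emph{First} I would recall the two ingredients. By Proposition~\ref{CG duality} we have $(\C \rtimes G)^*_\C \cong \C^G$, so $\C \rtimes G$ and $\C^G$ are Morita equivalent. By Theorem~\ref{criterion for C times G to be gr th}, the category $\C \rtimes G$ is group-theoretical if and only if there exists a $G$-invariant pointed $\C$-module category. So the entire corollary reduces to the single claim that \emph{$\C^G$ is group-theoretical if and only if $\C \rtimes G$ is group-theoretical}.

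\emph{Second}, I would establish that claim. A fusion category $\D$ is group-theoretical precisely when it is Morita equivalent to a pointed category $\Vec_G^\omega$; equivalently, when it admits a pointed module category (Definition~\ref{ptd module} and the definition of group-theoretical in Section~\ref{sect: group-theor}). Since Morita equivalence is an equivalence relation (as recalled after the definition of Morita equivalence, following M\"uger \cite{Mu}), and since $\C \rtimes G$ is Morita equivalent to $\C^G$, the two categories lie in the same Morita equivalence class. Being group-theoretical means lying in the Morita class of some $\Vec_G^\omega$; this is a property of the whole Morita class, so $\C^G$ is group-theoretical if and only if $\C \rtimes G$ is. Combining this equivalence with Theorem~\ref{criterion for C times G to be gr th} immediately gives the stated criterion for $\C^G$.

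The proof is essentially a one-line formal consequence, so there is no serious computational obstacle; the only point requiring care is the assertion that ``group-theoretical'' is a Morita-invariant notion. The cleanest way to see this is through the correspondence \eqref{ module cat coorrespond}: for a fixed module category $\M$ realizing $\D \cong \C^*_\M$, the assignment $\N \mapsto \Fun_\C(\M,\N)$ gives a bijection between $\C$-module categories and $\D$-module categories that, being an equivalence of module-category $2$-categories, preserves indecomposability and duals, hence sends pointed module categories to pointed module categories. Thus $\C \rtimes G$ has a pointed module category if and only if $\C^G$ does, which is exactly what is needed. I expect verifying this Morita-invariance to be the main (and only) substantive step, and it is already implicitly available from the framework recalled in Section~\ref{Sect: Prelim}.
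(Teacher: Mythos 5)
Your proposal is correct and matches the paper's intended argument exactly: the corollary is stated without proof precisely because it follows immediately from Proposition~\ref{CG duality} (Morita equivalence of $\C^G$ and $\C\rtimes G$), Theorem~\ref{criterion for C times G to be gr th}, and the Morita-invariance of group-theoreticity, which holds since Morita equivalence is an equivalence relation (M\"uger) and being group-theoretical means lying in the Morita class of a pointed category. Your additional justification of Morita-invariance via the correspondence \eqref{ module cat coorrespond} is a sound way to make this explicit.
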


\section{Construction of a series of non group-theoretical  fusion categories
from Tambara-Yamagami categories}
\label{sect: construction}

Let $p$ be an odd prime.

\subsection{A group-theoretical category $\mathbf{\C_p}$}
\label{Cp}

Let $G := D_{2p} \times \mathbb{Z}/p\mathbb{Z}$, where $D_{2p}$ is the dihedral
group of order $2p$. Let $K$ be a non-normal subgroup of $G$ of order~$p$. 
Let 
\[
\C_p:= (\Vec_G)^*_{\M(K, 1)}.
\]
Note that the centralizer of $K$ is the unique Sylow $p$-subgroup of $G$.
It follows from Section~\ref{sect: group-theor} (see \cite{O2}) that
the category $\C_p$ has $p^2$ invertible objects
and a unique simple object of Frobenius-Perron dimension $p$.
Thus, $\C_p$ is a Tambara-Yamagami fusion category. 

Note that $\C_p$ admits a fiber functor,
since $G= D_{2p}K$ and $D_{2p}\cap K =\{ e\}$, see Remark~\ref{rank consequences}(i). 
It follows from Remark~\ref{Tambara fiber} (see \cite{T}) that
\[
\C_p \cong   \TY(\mathbb{Z}/p\mathbb{Z}
\times \mathbb{Z}/p\mathbb{Z},\, \chi,\, \tfrac{1}{p}),
\]
where $\chi$ is a non-degenerate hyperbolic bilinear form on $\mathbb{Z}/p\mathbb{Z}
\times \mathbb{Z}/p\mathbb{Z}$.
Note that such $\chi$ is unique up to an automorphism of  $\mathbb{Z}/p\mathbb{Z}
\times \mathbb{Z}/p\mathbb{Z}$ and so  $\C_p$ does not depend on the choice
of $K$.

\subsection{Pointed $\mathbf{\C_p}$-module categories}

\begin{proposition}
\label{Cp module categories}
$\C_p$ has exactly $4$ non-equivalent pointed module categories. Two of these
categories have  rank $2p$ and two others have rank $2$.
\end{proposition}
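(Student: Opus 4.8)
The plan is to use the description of pointed module categories over a group-theoretical category given in Remark~\ref{rank consequences}(iii), combined with the concrete realization $\C_p = (\Vec_G)^*_{\M(K,1)}$ with $G = D_{2p} \times \BZ/p\BZ$ and $|K| = p$. By that remark, the pointed $\C_p$-module categories are exactly $\Fun_{\Vec_G}(\M(K,1),\, \M(N,\nu))$, where $N$ runs over \emph{normal abelian} subgroups of $G$ on which the (trivial) cocycle $\omega$ restricts trivially and $\nu \in H^2(N,k^\times)$ is a $G$-invariant cohomology class. Since here $\omega = 1$, the only constraints are that $N$ be normal and abelian. So first I would enumerate the normal abelian subgroups $N$ of $G = D_{2p}\times \BZ/p\BZ$.

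The group $G$ has order $2p^2$. Writing $D_{2p} = \langle r, s \mid r^p = s^2 = 1,\ srs = r^{-1}\rangle$ and $\BZ/p\BZ = \langle z\rangle$, the normal subgroups containing elements of order $2$ are problematic because conjugation by $r$ moves reflections, so I expect the normal abelian subgroups to be precisely the subgroups of the Sylow $p$-subgroup $P := \langle r\rangle \times \langle z\rangle \cong \BZ/p\BZ \times \BZ/p\BZ$ (which is abelian, normal, and characteristic as the unique Sylow $p$-subgroup), together with possibly the center. I would check that $P$ is normal and that each of its subgroups of order $p$ is normal in $G$ (since $P$ is normal and $G/P \cong \BZ/2\BZ$ acts on $P$ by $r\mapsto r^{-1}$, $z\mapsto z$, the order-$p$ subgroups fixed by this action are $\langle z\rangle$ and $\langle r\rangle$, while the others are permuted). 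Thus the candidate normal abelian subgroups are: the trivial group, $\langle r\rangle$, $\langle z\rangle$, and $P$ itself. For each I must determine which cocycles $\nu \in H^2(N,k^\times)$ are $G$-invariant and then compute the rank of the resulting module category via the rank formula \eqref{rank 12} (or equivalently Remark~\ref{rank consequences}(ii)).

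For the rank computation I would apply \eqref{rank 12} to $\Fun_{\Vec_G}(\M(K,1),\, \M(N,\nu))$: the rank equals $\sum_{H_1\bs G/H_2} m(g_i)$ with $H_1 = K$, $H_2 = N$. Since $H^2(\langle r\rangle, k^\times) = H^2(\langle z\rangle, k^\times) = 0$, the subgroups of order $p$ give $\nu = 1$ and each $m(g_i) = 1$, so the rank is just the number of double cosets $K\bs G/N$; for the two order-$p$ subgroups $\langle r\rangle, \langle z\rangle$ (noting $K$ is a non-normal order-$p$ subgroup, so $K \subset P$ is one of the "diagonal" subgroups distinct from these two) I expect to get rank $2p$. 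For $N = P \cong \BZ/p\BZ \times \BZ/p\BZ$ we have $H^2(P, k^\times) \cong \BZ/p\BZ$; here the $G$-invariance condition on $\nu$ and the possible non-degeneracy of $\nu$ on the intersection $H^g = K \cap gPg^{-1} = K$ (since $K \subset P$) change the projective-representation count $m(g_i)$, and I expect the two values of $N = P$ data (trivial $\nu$ versus a non-degenerate $G$-invariant $\nu$) to give the two rank-$2$ categories. The main obstacle will be the careful bookkeeping in the final case: determining exactly which classes $\nu \in H^2(P,k^\times)$ are $G$-invariant (the $G/P$-action sends a form $\nu$ to its pullback under $r\mapsto r^{-1}$, $z\mapsto z$, so I must identify its fixed classes), and then computing $m(g)$ — the number of irreducible $\mu^g$-projective representations of $K \cong \BZ/p\BZ$ — which is $1$ when the restricted Schur multiplier is trivial on the cyclic group $K$ and controls whether a single double coset contributes rank $1$. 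I would verify that exactly one nontrivial $G$-invariant $\nu$ survives and yields rank $2$, matching the trivial-$\nu$ case $N=P$ which also gives rank $2$, while confirming these four categories are pairwise inequivalent using the equivalence criterion from Section~\ref{sect: group-theor} (distinct ranks separate the rank-$2p$ from the rank-$2$ ones, and within each rank the underlying $(N,\nu)$ data are non-conjugate).
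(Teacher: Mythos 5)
Your general framework is the paper's own: use Remark~\ref{rank consequences}(iii) to identify pointed $\C_p$-module categories with pairs $(N,\nu)$, where $N\subseteq G$ is normal abelian and $\nu\in H^2(N,k^\times)$ is $G$-invariant, then compute ranks via \eqref{rank 12}; and your list of normal abelian subgroups $\{e\},\ \langle r\rangle,\ \langle z\rangle,\ P$ is correct. But the concrete outcomes you predict are wrong, and a proof following your roadmap would fail at the bookkeeping stage. The ranks are exactly backwards. For $N=\langle r\rangle$ or $\langle z\rangle$: both $K$ and $N$ lie in $P$ with $K\cap N=\{e\}$ (as $K$ is non-normal), and $N$ is normal, so every double coset $KgN$ has $|K||N|/|K\cap gNg^{-1}|=p^2$ elements; hence $|K\backslash G/N|=2p^2/p^2=2$, and each $m(g_i)=1$ since $H^{g_i}=K\cap N=\{e\}$. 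These two subgroups therefore give the two \emph{rank-$2$} categories, not rank $2p$. For $N=P$ with $\nu=1$: since $K\subset P$ and $P$ is normal, $KgP=gP$, so there are $[G:P]=2$ double cosets, and $H^{g_i}=K\cap P=K\cong \BZ/p\BZ$ with trivial multiplier, so $m(g_i)=p$ and the rank is $2p$, not $2$. Finally, you list $\{e\}$ among the candidates but silently drop it from the final count; it is a legitimate datum and supplies a rank-$2p$ category, of rank $|K\backslash G|=2p$.

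The more serious error is your proposed source of the second rank-$2$ category, namely a nontrivial $G$-invariant $\nu$ on $P$. No such $\nu$ exists: the $G$-action on $H^2(P,k^\times)\cong\BZ/p\BZ$ factors through $G/P\cong\BZ/2\BZ$, and a reflection acts on $P=\langle r\rangle\times\langle z\rangle$ by $(x,y)\mapsto(-x,y)$, hence sends the class with alternating form $\xi^{c(x_1y_2-x_2y_1)}$ to the class with $c$ replaced by $-c$; since $p$ is odd, only $c=0$ is fixed. (This is precisely the paper's assertion that the only $G$-invariant class on $P$ is the trivial one.) Moreover, even if an invariant nontrivial $\nu$ existed, it could not lower the rank as you hope: the relevant intersection for $N=P$ is $H^{g}=K\cong\BZ/p\BZ$, and $H^2(\BZ/p\BZ,k^\times)=0$, so the restricted multiplier $\mu^{g}$ is always cohomologically trivial and $m(g)=p$ regardless of $\nu$ --- non-degeneracy on a cyclic intersection is impossible. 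The correct tally, as in the paper, is that $(\{e\},1)$ and $(P,1)$ give the two rank-$2p$ categories, while $(\langle r\rangle,1)$ and $(\langle z\rangle,1)$ give the two rank-$2$ categories; with these corrections your argument becomes the paper's proof.
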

\begin{proof}
Recall from  Remark~\ref{rank consequences}(ii) (see \cite{Na}) that 
pointed $\Vec_G$-module categories  correspond to pairs $(H,\, \nu)$,
where $H$ is a normal Abelian subgroup of $G$ and $\nu\in H^2(H, k^\times)$
is a $G$-invariant cohomology class. The normal Abelian subgroups
of $G$ are  the following: $\{e\}$, two normal subgroups of order $p$ (denoted $H_1,\, H_2$),
and the Sylow $p$-subgroup $P$. The subgroups $\{e\},\, H_1,\, H_2$ have trivial
second cohomology, and the only $G$-invariant cohomology class in $P$
is the trivial one. Hence,
\[
\M(\{e\},\, 1),\, \M(H_1,\, 1),\, \M(H_2,\, 1),\, \M(P,\,1)
\]
are all the pointed $\Vec_G$-module categories. 

The corresponding $\C_p$-module categories and their ranks are 
found using Remark~\ref{rank consequences}:
\begin{eqnarray*}
\label{1st ptd}
\mbox{rank}(\Fun_{\Vec_G}(\M(\{e\},\, 1),\, \M(K,\, 1)))&=&2p,\\
\label{2nd ptd}
\mbox{rank}(\Fun_{\Vec_G}(\M(P,\,1),\, \M(K,\, 1)))&=& 2p,\\
\label{3rd ptd}
\mbox{rank}(\Fun_{\Vec_G}(\M(H_1,\, 1),\,  \M(K,\, 1))) &=& 2,\\
\label{4th ptd}
\mbox{rank}(\Fun_{\Vec_G}(\M(H_2,\, 1),\,  \M(K,\, 1))) &=& 2,
\end{eqnarray*}
and the statement follows.
\end{proof}

Fix a primitive $p$-th root of unity $\xi$ in $k$. 
Any hyperbolic form $\chi$ on 
$A:=\mathbb{Z}/p\mathbb{Z} \times \mathbb{Z}/p\mathbb{Z} 
=\{(x,y)\mid x,y\in  \mathbb{Z}/p\mathbb{Z} \}$  is isomorphic to
\begin{equation}
\label{form chi}
\chi((x_1,x_2),\, (y_1, y_2)) = \xi^{x_1y_2+ y_1x_2},
\end{equation}
see, e.g., \cite{S}. There are exactly two Lagrangian subgroups of $A$:
\begin{equation}
\label{L1 and L2}
L_1=\{ (x,0) \mid  x\in  \mathbb{Z}/p\mathbb{Z} \}\quad \mbox{and} \quad
L_2 = \{ (0,y) \mid  y\in  \mathbb{Z}/p\mathbb{Z} \}.
\end{equation}

Observe that there is a $\mathbb{Z}/2\mathbb{Z}$-grading 
\[
\C_p = (\C_p)_0 \oplus (\C_p)_1,
\]
where the invertible objects of $\C_p$ span 
the trivial component $(\C_p)_0 \cong  \Vec_A$ 
and the unique non-invertible simple object spans $(\C_p)_1 \cong \Vec$.

The next two Lemmas describe the pointed $\C_p$-module categories 
from Proposition~\ref{Cp module categories} in terms of 
the explicit presentation of Tambara-Yamagami categories
given in Section~\ref{sect: definition of TY}. 


Recall that for a  subgroup $H \subset A$ and a $2$-cocycle
$\mu\in Z^2(H,\,k^\times)$ we defined the algebra 
$R(H,\,\mu) =\oplus_{a\in H}\, a$ in equation  \eqref{algebra R}
in Section~\ref{sect: group-theor}.

\begin{lemma}
\label{rank 2p}
Consider algebras  $R_i := R(L_i,\, 1),\, i=1,2$ in $\Vec_A \subset \C_p$.
The categories of $R_i$-modules, $i=1,2,$ in $\C_p$ are 
non-equivalent pointed $\C_p$-module categories of  rank $2p$.
\end{lemma}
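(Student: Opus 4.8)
The plan is to compute the dual category $(\C_p)^*_{\M_i}$ for each module category $\M_i$ of $R_i$-modules and show it is pointed, and simultaneously verify that these two module categories are non-equivalent and of rank $2p$. The rank claim is essentially already in hand: by Proposition~\ref{Cp module categories} exactly two of the four pointed $\C_p$-module categories have rank $2p$, so it suffices to identify the $R_i$-module categories with those two. The natural strategy is to work directly inside $\TY(A,\chi,\tfrac{1}{p})$ using the explicit associativity constraints from Section~\ref{sect: definition of TY}, rather than translating back to $\Vec_G$.

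First I would determine the simple objects of $\M_i := R_i\text{-mod}$ in $\C_p$. Since $R_i = \oplus_{a\in L_i}\, a$ sits inside the pointed part $\Vec_A$, a free module $R_i \ot X$ decomposes according to the $L_i$-cosets in $A$; each of the $p$ cosets of $L_i$ in $A$ gives an invertible-type simple module, and the object $m$ gives further simple modules. The key computation is how $R_i$ acts on $m$: because $m\ot a = m$ for all $a$, the induced module $R_i \ot m$ will split (the relevant associativity constraints $\alpha_{a,m,b}=\chi(a,b)\id_m$ restricted to $a,b\in L_i$ are trivial since $\chi|_{L_i\times L_i}=1$ by the Lagrangian condition), and I expect it to contribute additional simple modules so that the total rank is $2p$. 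I would count these carefully to confirm rank $2p$, matching Proposition~\ref{Cp module categories}.

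Next, to show $\M_i$ is pointed I would exhibit $(\C_p)^*_{\M_i}$ as a pointed category, most cleanly by identifying the $R_i$-bimodule category with some $\Vec_{G'}^{\omega'}$. Since $L_i$ is Lagrangian and $\chi|_{L_i\times L_i}=1$, the algebra $R_i$ is commutative and étale-like, and the general theory of Section~\ref{sect: group-theor}, specifically Remark~\ref{rank consequences}(iii), guarantees that these module categories correspond to normal Abelian subgroups $H_1, H_2$ of $G$ under the Morita correspondence. Thus I would argue that $\M_{R_i}$ is exactly the image of one of the rank-$2p$ pointed categories $\M(H_j,1)$ under the equivalence \eqref{ module cat coorrespond}, which is pointed by construction, and hence $(\C_p)^*_{\M_i}$ is pointed.

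Finally, for non-equivalence of $\M_1$ and $\M_2$: two $R_i$-module categories are equivalent as $\C_p$-module categories precisely when $R_1$ and $R_2$ are related by a tensor autoequivalence of $\C_p$ fixing the module structure, equivalently when $L_1$ and $L_2$ are related by an automorphism in $\Aut(A,\chi)$ that is realized inside the relevant equivalence class. The obstruction here is the genuine subtlety: one must check that no such autoequivalence identifies the two Lagrangians in a way respecting all the data, and I would do this by tracking the associated cohomological or grading invariant distinguishing the two cases, or by appealing to the fact that they descend to the two distinct normal subgroups $H_1 \not\cong H_2$ in $G$ and hence give genuinely distinct module categories by the classification recalled in Section~\ref{sect: group-theor}. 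The main obstacle I anticipate is precisely this non-equivalence step, since both Lagrangians are abstractly isomorphic as subgroups and the distinction must come from their position relative to $\chi$ and the module structure; the rank and pointedness computations, by contrast, should follow routinely from the explicit Tambara-Yamagami constraints.
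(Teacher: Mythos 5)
Your rank computation is sound and is in fact the paper's own argument: the $p$ cosets of $L_i$ give $p$ simple modules in the pointed part, and the Lagrangian condition $\chi|_{L_i\times L_i}=1$ makes $\alpha_{a,m,b}=\id_m$ for $a,b\in L_i$, so that $m$ carries exactly $p$ module structures, giving rank $2p$. The genuine gap is in your pointedness step, which as proposed is circular. You want to identify the category of $R_i$-modules with one of the pointed module categories classified in Proposition~\ref{Cp module categories} by invoking Remark~\ref{rank consequences}(iii); but that remark only says which $\Vec_G$-data $(N,\nu)$ have pointed duals --- to apply it to your category you would first have to compute which pair $(H,\mu)$ the category of $R_i$-modules in $\C_p$ corresponds to under the Morita $2$-equivalence, and you never indicate how to do this. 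The only invariant you actually compute is the rank, and rank $2p$ does not imply pointed: taking a cohomologically non-trivial $\mu\in H^2(P,k^\times)$ (which is necessarily not $G$-invariant, since the involution inverts that cohomology group), the category $\Fun_{\Vec_G}(\M(P,\mu),\M(K,1))$ also has rank $2p$ --- both double cosets $P$ and $G\setminus P$ contribute $p$ because $H^2$ of the cyclic groups $P\cap gKg^{-1}$ vanishes --- yet it is not pointed by Remark~\ref{rank consequences}(iii). There is also a factual slip: the two rank-$2p$ pointed categories come from the subgroups $\{e\}$ and $P$, not from $H_1$ and $H_2$; the latter give the two rank-$2$ categories (see the proof of Proposition~\ref{Cp module categories}).

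The paper closes this gap by staying inside the Tambara-Yamagami presentation and counting simple $R_i$-\emph{bimodules}: there are $p^2$ of them supported in $(\C_p)_0$, and, again because $\alpha_{a,m,b}=\id_m$ on $L_i\times L_i$, the object $m$ carries $p^2$ pairwise non-isomorphic bimodule structures; hence the dual category, which has Frobenius-Perron dimension $2p^2$, has $2p^2$ simple objects and is therefore forced to be pointed. You should replace your identification argument with this count (or supply the missing computation of the $\Vec_G$-data of your module category, which is substantially harder). Likewise, for non-equivalence you anticipate needing a subtle invariant, but the paper's Lemma~\ref{homogen} gives it in one line: restricted to $\Vec_A\subset\C_p$, the category of $R_i$-modules decomposes into copies of $\M(L_i,1)$, and $\M(L_1,1)\not\cong\M(L_2,1)$ as $\Vec_A$-module categories because $L_1\neq L_2$ are distinct subgroups of the abelian group $A$; this replaces your appeal to the unestablished correspondence with normal subgroups of $G$.
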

\begin{proof}
Fix $i\in \{1,\,2\}$. There are  $p$ non-isomorphic simple
$R_i$-modules and $p^2$ non-isomorphic simple 
$R_i$-bimodules  in $(\C_p)_0$. The object $m$ has 
$p$ structures of an $R_i$-module and, hence,
$p^2$ non-isomorphic structures of an $R_i$-bimodule,
thanks to the associativity constraint property 
$\alpha_{a,m,b} = \id_m$ for all $a,b\in L_i$. 
Thus, the category of $R_{i}$-modules in $\C_p$ 
is a pointed $\C_p$-module category of rank $2p$. (see \cite{O2})

The two $\C_p$-module categories in question are not equivalent, since 
by Lemma~\ref{homogen} they are 
already not equivalent as $\Vec_A$-module categories.
\end{proof}

\begin{lemma}
\label{rank 2}
Let $\M$ be an indecomposable  $\C_p$-module category of rank $2$.
There is a  cohomologically non-trivial 
$2$-cocycle  $\mu \in Z^2(A,\, k^\times)$
such that $\M$ is equivalent to 
the category of $R(A,\, \mu)$-modules in $\C_p$. 
\end{lemma}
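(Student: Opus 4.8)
The plan is to prove the statement by exhibiting, for any rank-$2$ indecomposable $\C_p$-module category $\M$, an algebra of the form $R(A,\mu)$ in $\C_p$ whose module category is equivalent to $\M$. I would begin by using Ostrik's theorem (from Section~\ref{Sect: Prelim}): since $\M$ is indecomposable, it is the category of left $B$-modules for some algebra $B$ in $\C_p$, and I can take $B=\uHom(V,V)$ for a simple object $V$ of $\M$. The key numerical constraint is $\FPdim(\C_p)=2p^2$ together with the fact that a rank-$2$ module category forces $\FPdim(B)=\FPdim(\C_p)/(\text{number of simples})$ weighted appropriately; more precisely I would use that $\sum_{V}\FPdim(\uHom(V,V))$-type relations pin down $\FPdim(B)=p^2$. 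So the connected algebra $B$ has Frobenius-Perron dimension $p^2$.

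Next I would determine the grading support of $B$. Recall $\C_p=(\C_p)_0\oplus(\C_p)_1$ with $(\C_p)_0\cong\Vec_A$ (the $p^2$ invertibles) and $(\C_p)_1\cong\Vec$ (spanned by $m$, with $\FPdim(m)=p$). Since $\FPdim(B)=p^2$, the object $m$ cannot appear in $B$ with positive multiplicity (it would contribute at least $p$, and in fact a connected étale-type counting rules out a mixed algebra of this dimension whose module category has rank $2$); I would argue that $B$ lies entirely in $(\C_p)_0=\Vec_A$ and, being connected of dimension $p^2=|A|$, must be $B=\bigoplus_{a\in A}a=R(A,\mu)$ for some $\mu\in Z^2(A,k^\times)$. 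Here I am using that algebra structures on $\bigoplus_{a\in A}a$ in $\Vec_A$ are classified by $2$-cocycles $\mu$, exactly as in equation~\eqref{algebra R}. Thus $\M\cong\M(A,\mu)$ as claimed, and it remains only to see that $\mu$ must be \emph{cohomologically nontrivial}.

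The main obstacle, and the real content, is establishing that $\mu$ is nondegenerate (cohomologically nontrivial). The plan here is to compute the rank of the $\C_p$-module category of $R(A,\mu)$-modules in terms of $\mu$ and compare with the required value $2$. The rank is the number of simple $R(A,\mu)$-modules in $\C_p$, which splits as (simples supported on $(\C_p)_0$) plus (simples supported on $m$). The count in the $\Vec_A$-part is governed by the projective representation theory of $A$ with Schur multiplier $\mu$: a nondegenerate $\mu$ yields a single simple module of dimension $p$ there, whereas a trivial or degenerate $\mu$ yields more. Simultaneously the number of $R(A,\mu)$-module structures on $m$ is controlled by the compatibility of $\mu$ with the associativity constraint $\alpha_{a,m,b}=\chi(a,b)\id_m$, so that $m$ supports a module structure precisely when $\mu$ trivializes the twist coming from $\chi$. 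I expect to find that rank equals $2$ exactly when $\mu$ is nondegenerate (equivalently cohomologically nontrivial on $A\cong(\mathbb{Z}/p\mathbb{Z})^2$, where nontrivial and nondegenerate coincide), giving one invertible-type simple and one simple built from $m$; a trivial $\mu$ instead reproduces the rank-$2p$ categories of Lemma~\ref{rank 2p}. This rank computation, via \eqref{rank 12} and Remark~\ref{rank consequences}(ii), is where the careful bookkeeping lives and is the step I would expect to be the crux.

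Finally I would note uniqueness up to the equivalence relation on pairs: different nondegenerate cocycles $\mu$ differing by the action of $\Aut(A,\chi)$ and by coboundaries give equivalent module categories, consistent with Proposition~\ref{Cp module categories} producing exactly two rank-$2$ pointed module categories. I would close by remarking that the nondegeneracy of $\mu$ is what makes the twisted group algebra $k_\mu[A]$ simple, which is the algebraic shadow of $\M$ having the small rank $2$ rather than $2p$.
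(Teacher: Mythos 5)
Your overall skeleton (Ostrik's theorem, $B=\uHom(x,x)$ supported on the invertibles, hence $B=R(A,\mu)$, then nontriviality of $\mu$ by a rank count) is the same as the paper's, but two of your key steps do not hold up as written. The first genuine gap is the step ruling out $m$ as a constituent of $B$. Knowing only $\FPdim(B)=p^2$ does not exclude a mixed algebra: for instance $\bigoplus_{a\in L}a\,\oplus\, m^{\oplus(p-1)}$ with $|L|=p$ also has Frobenius--Perron dimension $p^2$, and the ``connected \'etale-type counting'' you invoke to dismiss such algebras is precisely the assertion that needs proof. (Moreover, your derivation of $\FPdim(B)=p^2$ itself presupposes that the two simples of $\M$ have equal Frobenius--Perron dimension, which you have not established.) The paper avoids all of this by first pinning down the fusion rules of $\M$: since $|A|=p^2$ is odd and $A$ permutes the two simples $x,y$, every $a\in A$ fixes them, so $x\ot a=x$ and $y\ot a=y$; indecomposability and a dimension count then force $x\ot m=py$, $y\ot m=px$. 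From these rules the object $B=\uHom(x,x)$ is read off directly from \eqref{int Hom}: each $a\in A$ occurs with multiplicity $\dim\Hom_\M(x\ot a,\,x)=1$, while $m$ occurs with multiplicity $\dim\Hom_\M(x\ot m,\,x)=\dim\Hom_\M(py,\,x)=0$. Hence $B=\bigoplus_{a\in A}a=R(A,\mu)$ for some $\mu\in Z^2(A,k^\times)$, with no Frobenius--Perron bookkeeping at all.

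The second problem is that your criterion for the rank to equal $2$ has the roles of the two graded pieces reversed and invokes $\chi$ where it is irrelevant. The simples of the category of $R(A,\mu)$-modules lying in $(\C_p)_0=\Vec_A$ number exactly one for \emph{every} $\mu$ (this is $\M(A,\mu)$, of rank $[A:A]=1$); it is the simples supported on $m$ that are counted by irreducible $\mu$-projective representations of $A$, because $\alpha_{a,b,m}=\id_m$ makes a module structure on $m^{\oplus k}$ the same thing as a $k$-dimensional projective representation of $A$ with multiplier $\mu$. The form $\chi$ plays no role in this count: $\alpha_{a,m,b}=\chi(a,b)\id_m$ enters only for \emph{bimodule} structures (as in the proof of Lemma~\ref{rank 2p}), so your claim that $m$ supports a module structure ``precisely when $\mu$ trivializes the twist coming from $\chi$'' is incorrect --- the correct condition is that $\mu$ be a coboundary. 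Relatedly, a trivial $\mu$ yields rank $1+p^2$, not the rank-$2p$ categories of Lemma~\ref{rank 2p}; those arise from $R(L_i,1)$, not from $R(A,1)$. The paper's nontriviality argument is just the clean contrapositive of this count: if $\mu$ were cohomologically trivial, then $m$ would carry $p^2$ pairwise non-isomorphic $R(A,1)$-module structures, so the module category would have rank greater than $2$, contradicting the hypothesis.
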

\begin{proof}
Let $x,\, y$ be simple objects of $\M$. It is easy to see 
that the fusion rules of $\M$ are
\[
x\ot a =x,\, y\ot a =y,\, x \ot m = py,\, y \ot m = px, \quad a\in A.
\]
Any such category $\M$ is equivalent to the category of ${B}$-modules
in $\C_p$, where ${B}= \underline{\Hom}(x,\,x)$ is the internal $\Hom$,
see Section~\ref{Sect: Prelim} and \cite{O1}. 
Thus, ${B} = \oplus_{a\in A}\, a$ as an object of $\C_p$. Hence,
${B} = R(A,\, \mu)$ for some $\mu \in Z^2(A,\, k^\times)$, 
where the algebra $R(A,\, \mu)$ is defined in \eqref{algebra R}.

Note that $\mu$ must be cohomologically non-trivial, since the object $m$
has $p^2$ structures of an $R(A,\, 1)$-module, 
and so the category of $R(A,\, 1)$-modules in $\C_p$ has rank $> 2$.
\end{proof}

\subsection{An action of $\mathbf{\mathbb{Z}/2\mathbb{Z}}$ on $\mathbf{\C_p}$
without invariant pointed $\mathbf{\C_p}$-module categories}

As before, let $A = \mathbb{Z}/p\mathbb{Z} \times \mathbb{Z}/p\mathbb{Z}$
and let $\chi: A \times A \to k^\times$ be a hyperbolic bilinear form
defined in \eqref{form chi}.  Let $t$ be the group automorphism
of $A$ defined by  
\begin{equation}
\label{t}
t(x,y) = (y,x),\qquad x,y\in \mathbb{Z}/p\mathbb{Z}.
\end{equation}
Then $t \in \Aut(A,\,\chi)$, i.e., $\chi\circ(t \times t) =\chi$.
By Proposition~\ref{orthogonal group acts} this 
$t$ gives rise to a tensor autoequivalence of $\C_p$ of order $2$
(which we will also denote $t$) and, hence,
to a  $\mathbb{Z}/2\mathbb{Z}$-action on $\C_p$.

\begin{proposition}
\label{no invariant}
The above action of $\mathbb{Z}/2\mathbb{Z}$ has no invariant
pointed $\C_p$-module categories.
\end{proposition}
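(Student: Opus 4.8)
The plan is to use the explicit description of the pointed $\C_p$-module categories from Proposition~\ref{Cp module categories}, together with Lemmas~\ref{rank 2p} and~\ref{rank 2}, and to detect the action of $t$ on them by restricting the $\C_p$-action to the pointed fusion subcategory $\Vec_A=(\C_p)_0$. The guiding principle is that $t$ preserves the $\mathbb{Z}/2\mathbb{Z}$-grading of $\C_p$ (it fixes $m$ and permutes $A$), so restriction to $\Vec_A$ commutes with the twist $\M\mapsto\M^t$; concretely $(\M^t)|_{\Vec_A}\cong(\M|_{\Vec_A})^{t}$, where on the right $t$ now denotes the coordinate-swap automorphism of $A$. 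Hence any invariant of $\M|_{\Vec_A}$ as a $\Vec_A$-module category that $t$ fails to preserve will witness $\M\not\cong\M^t$.

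First I would record the relevant invariant. By Lemma~\ref{homogen} the indecomposable $\Vec_A$-module summands of a pointed $\C_p$-module category $\M$ are all equivalent, and by the classification recalled in Section~\ref{sect: group-theor} an indecomposable $\Vec_A$-module category is determined up to equivalence by a pair $(H,\mu)$ with $H\subseteq A$ and $\mu\in H^2(H,k^\times)$ (there is no conjugation, since $A$ is abelian and the associator on $\Vec_A$ is trivial). Thus the equivalence class of the common summand $\M(H,\mu)$ is a well-defined invariant of $\M$. Using the description ``$\M\cong A\text{-mod}$ implies $\M^t\cong t(A)\text{-mod}$'' from just before Lemma~\ref{dual preserved}, together with $t(R(H,\mu))=R(t(H),\,\mu\circ(t\times t))$, this invariant is sent by $t$ to $(t(H),\,\mu\circ(t\times t))$.

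Next I would run the two cases. For the rank-$2p$ categories $R_i\text{-mod}$ of Lemma~\ref{rank 2p} the $\Vec_A$-invariant is $(L_i,1)$; since $t$ interchanges the two Lagrangians, $t(L_1)=L_2\neq L_1$, the invariant changes, and neither category is $t$-invariant (this also re-proves their inequivalence). For the two rank-$2$ categories $R(A,\mu)\text{-mod}$ of Lemma~\ref{rank 2} the subgroup $H=A$ is fixed by $t$, so everything hinges on the cohomology class. Here I would use that $H^2(A,k^\times)\cong\mathbb{Z}/p\mathbb{Z}$ is detected by the alternating form $\beta_\mu(a,b)=\mu(a,b)\mu(b,a)^{-1}$, and that $\beta_{\mu\circ(t\times t)}=\beta_\mu\circ(t\times t)$. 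Since $t$ is the coordinate swap and $\beta_\mu$ is alternating on a $2$-dimensional space, precomposition with $t$ scales $\beta_\mu$ by $\det(t)=-1$; equivalently the nonzero class $c$ is sent to $-c$. As $p$ is odd and $c\neq0$, we have $c\neq-c$, so the invariant again changes and $\M\not\cong\M^t$.

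Combining the two cases shows that none of the four pointed $\C_p$-module categories is fixed by $t$, which is the assertion. The main obstacle I anticipate is justifying that this $\Vec_A$-restriction invariant is genuinely well-defined on $\C_p$-module categories and is computed by the same cocycle $\mu$ appearing in Lemma~\ref{rank 2}: one must check that the internal $\Hom$ of a simple object, taken in $\C_p$, already lies in $\Vec_A$ (it equals $R(A,\mu)$, which has no $m$-component), so that the $\Vec_A$- and $\C_p$-internal $\Hom$'s agree; and in the rank-$2$ case one must verify that it is the class $c$ itself, and not merely the pair $\{c,-c\}$, that is pinned down by the $\Vec_A$-module structure, for otherwise the flip $c\mapsto-c$ would be invisible. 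Once these points are secured the argument reduces to comparing the pairs $(H,\mu)$ before and after applying $t$.
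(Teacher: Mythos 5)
Your proposal is correct and follows essentially the same route as the paper: classify the four pointed $\C_p$-module categories, note that $t$ swaps the two Lagrangians (hence the two rank-$2p$ categories) and inverts the class in $H^2(A,k^\times)\cong\mathbb{Z}/p\mathbb{Z}$ (hence swaps the two rank-$2$ categories, since $c\neq -c$ for $p$ odd), and detect non-equivalence of $\M$ and $\M^t$ by restricting to $\Vec_A$ via Lemma~\ref{homogen}. Your only additions are elaborations of points the paper asserts without proof -- the $\det(t)=-1$ computation behind the inversion action on $H^2$, and the well-definedness of the common-summand invariant via internal $\Hom$ -- which are sound.
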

\begin{proof}
By Proposition~\ref{Cp module categories}, there are exactly $4$ non-equivalent 
pointed $\C_p$-module categories. These categories are described in 
Lemmas~\ref{rank 2p} and \ref{rank 2} as categories of modules 
over certain algebras in $\C_p$.  

Note that $t$ permutes Lagrangian subgroups of $A$, 
i.e., maps $L_1$ to $L_2$ and vice versa. Hence, $t$ maps 
$R(L_1,\, 1)$ to $R(L_2,\, 1)$ 
and vice versa. It follows from Lemma~\ref{rank 2p} that $t$
permutes two pointed $\C_p$-module categories of rank $2p$.

Note that $t$ acts on $H^2(A, k^\times)\cong {\mathbb{Z}/p\mathbb{Z}}$ 
by taking  inverses, i.e., it maps
the cohomology class represented by a $2$-cocycle $\mu$ to that of $\mu^{-1}$.
In particular, the algebra $t(R(A,\, \mu))$
is isomorphic to $R(A,\, \mu^{-1})$.
We claim that $t$ permutes the two pointed
$\C_p$-module categories of rank $2$.
Indeed, let $\M$ be such a category.  By 
Lemma~\ref{rank 2}, $\M$ is equivalent to the category of
$R(A,\, \mu)$-modules in $\C_p$ for some cohomologically non-trivial 
$\mu\in Z^2(A, k^\times)$.  
By Lemma~\ref{dual preserved} $\M^t$ is pointed.
It is equivalent 
to the category of $R(A,\, \mu^{-1})$-modules. 
Considering $\M$ and $\M^t$ as $\Vec_A$-module categories we have,
using Lemma~\ref{homogen}:
\begin{equation}
\label{restriction M}
\M \cong  \M(A,\mu) \oplus \M(A,\mu)
\not \cong  \M(A,\mu^{-1}) \oplus \M(A,\mu^{-1}) \cong \M^t,
\end{equation}
where $\M(A, \mu)$ denotes the $\Vec_A$-module category 
of $R(A,\, \mu)$-modules in $\Vec_A$ 
described in Section~\ref{Sect: Prelim}. 
This means that $\M$ and $\M^t$
are non-equivalent as $\Vec_A$-module categories.
Hence, they are not equivalent as $\C_p$-module categories.
\end{proof}

\begin{remark}
For any cohomologically non-trivial $\mu \in Z^2(A,\,k^\times)$ let
$\N_\mu$ denote the category of $R(A,\,\mu)$-modules in $\C_p$. 
As a $\Vec_A$-module category $\N_\mu$ decomposes as
\[
N_\mu = \M(A,\,\mu) \oplus \M(A,\mu'),
\]
where the cohomology class of $\mu'\in Z^2(A,\,k^\times)$
depends on that of $\mu$ as follows. Note that 
$\mbox{Alt}(\mu)(x,\,y):= {\mu(y,\,x)}\mu(x,\,y)^{-1},\, x,y\in A$ is a non-degenerate
alternating bilinear form. There is a unique group 
automorphism $\iota_\mu\in \Aut(A)$ defined by the property 
\[
\mbox{Alt}(\mu)(x,\,\iota_\mu(a)) = \chi(x,\, a),\qquad \mbox{ for all } x\in A. 
\]
Then $\mu'(x,y) = \mu^{-1}(\iota_\mu(x),\, \iota_\mu(y)),\, x,y\in A$. 
It can be checked directly that there are exactly two cohomology classes $\mu$
with the property that $\mu$ and $\mu'$ are cohomologous and that these
two classes are inverses of each other. This fact is not needed in the proof
of Proposition~\ref{no invariant} but rather explains the nature of 
the cohomology classes of cocycles $\mu,\,\mu^{-1}\in Z^2(A,\,k^\times)$
corresponding to pointed $\C_p$-module categories of rank $2$. 
\end{remark}

\begin{corollary}
The category $(\C_p)^{\mathbb{Z}/2\mathbb{Z}}$ corresponding to
the action \eqref{t} is a non group-theoretical fusion category 
and  is equivalent to the representation category a semisimple
Hopf algebra of dimension $4p^2$.
\end{corollary}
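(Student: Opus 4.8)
The plan is to assemble this statement directly from the machinery built up in the preceding sections; almost no new computation is required, since the two substantive ingredients---the group-theoreticity criterion and the absence of invariant pointed module categories---are already in hand. The equivariantization $(\C_p)^{\mathbb{Z}/2\mathbb{Z}}$ is a fusion category by the general properties of the equivariantization construction recalled after Definition~\ref{Gequiv object}, so the task splits cleanly into three assertions: not group-theoretical, a representation category of a semisimple Hopf algebra, and of dimension $4p^2$.

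First I would dispose of the non group-theoretical claim. By Corollary~\ref{criterion for CG to be gr th}, the category $(\C_p)^{\mathbb{Z}/2\mathbb{Z}}$ is group-theoretical precisely when the $\mathbb{Z}/2\mathbb{Z}$-action admits an invariant pointed $\C_p$-module category. But Proposition~\ref{no invariant} asserts exactly that the action~\eqref{t} has no such module category: the involution $t$ interchanges the two rank-$2p$ pointed module categories and interchanges the two rank-$2$ ones, so none is fixed. Hence $(\C_p)^{\mathbb{Z}/2\mathbb{Z}}$ is not group-theoretical.

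Next I would produce the Hopf algebra. In Section~\ref{Cp} it was shown that $\C_p$ admits a fiber functor $F\colon \C_p \to \Vec$ (using $G = D_{2p}K$ with $D_{2p}\cap K=\{e\}$ via Remark~\ref{rank consequences}(i), together with Remark~\ref{Tambara fiber}). By Remark~\ref{fiber}, the composite $F\circ\mbox{Forg}$ with the forgetful functor $\mbox{Forg}\colon (\C_p)^{\mathbb{Z}/2\mathbb{Z}} \to \C_p$ is a fiber functor on the equivariantization. Tannakian reconstruction~\cite{U} then yields a semisimple Hopf algebra $H$ with $(\C_p)^{\mathbb{Z}/2\mathbb{Z}}\cong \Rep(H)$.

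It remains to compute $\dim H$. Since $\dim H = \FPdim(\Rep(H)) = \FPdim((\C_p)^{\mathbb{Z}/2\mathbb{Z}})$, I would pass to the Morita-equivalent crossed product: by Proposition~\ref{CG duality} the category $(\C_p)^{\mathbb{Z}/2\mathbb{Z}}$ is dual to $\C_p \rtimes \mathbb{Z}/2\mathbb{Z}$, and Morita-equivalent fusion categories share the same Frobenius-Perron dimension (as $\FPdim(\C^*_\M)=\FPdim(\C)$). Therefore $\FPdim((\C_p)^{\mathbb{Z}/2\mathbb{Z}}) = \FPdim(\C_p \rtimes \mathbb{Z}/2\mathbb{Z}) = 2\,\FPdim(\C_p) = 2\cdot 2p^2 = 4p^2$, giving $\dim H = 4p^2$. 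The only place where genuine content enters is the non group-theoretical direction, and there the real work has already been done in the converse implication of Theorem~\ref{criterion for C times G to be gr th} and in Proposition~\ref{no invariant}; the remainder is formal.
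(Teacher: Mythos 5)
Your proposal is correct and follows essentially the same route as the paper: non group-theoreticity from Corollary~\ref{criterion for CG to be gr th} together with Proposition~\ref{no invariant}, and the Hopf algebra from the fiber functor on $\C_p$ composed with the forgetful functor via Remark~\ref{fiber} and Tannakian reconstruction. Your explicit dimension count $\FPdim((\C_p)^{\mathbb{Z}/2\mathbb{Z}}) = \FPdim(\C_p \rtimes \mathbb{Z}/2\mathbb{Z}) = 4p^2$ using Proposition~\ref{CG duality} and the invariance of $\FPdim$ under duality is a valid way to supply a detail the paper leaves implicit.
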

\begin{proof}
The category $(\C_p)^{\mathbb{Z}/2\mathbb{Z}}$  is non
group-theoretical by Corollary~\ref{criterion for CG to be gr th} 
and Proposition~\ref{no invariant}.  Since $\C_p$ has a fiber
functor, then so does its equivariantization, see Remark~\ref{fiber}.
By Tannakian reconstruction theorem \cite{U}, 
$(\C_p)^{\mathbb{Z}/2\mathbb{Z}}$ 
is equivalent to the representation category of a semisimple Hopf algebra.
\end{proof}

\section{Non group-theoretical 
semisimple Hopf algebras of dimension ${4p^2}$ as extensions}
\label{Hopf}

Let $G$ be a finite group. Below $k^G$ will denote the commutative
Hopf algebra of functions on $G$ and $kG$ will denote the cocommutative
group Hopf algebra of $G$.  For a Hopf algebra $H$ its representation 
category will be denoted $\Rep(H)$.

We will freely use Hopf algebra
notation and terminology, see \cite{M} as a reference.

Let $H_p$ be a semisimple Hopf algebra such that $\Rep(H_p) \cong 
(\C_p)^{\mathbb{Z}/2\mathbb{Z}}$ as a fusion  category. We have
$\dim_k(H_p)=4p^2$.
In Proposition~\ref{describe Hp} below we find the algebra structure of  
$H_p$ and its dual $H_p^*$ and describe them in terms of Hopf 
algebra extensions.  

\begin{remark}
\label{natale}
It was shown by S.~Natale \cite{Nt} that a semisimple Hopf
algebra $H$ for which there is a short exact sequence of Hopf algebras
\[
k \to k^{G_1} \to H \to k{G_2} \to k,
\]
where $G_1,\, G_2$ are finite groups, then $\Rep(H)$ is group-theoretical.
Thus, $H_p$ cannot be obtained as an extension of a cocommutative Hopf algebra
by a commutative one.
\end{remark}

It was shown by A.~Masuoka \cite{Ma} that there is a unique, up to an 
isomorphism, semisimple Hopf algebra $A_p$ of dimension $2p^2$
with exactly $p^2$ group-like elements. This Hopf algebra is dual to 
the Kac-Paljutkin Hopf algebra \cite{KP} and 
there is a short exact sequence of Hopf algebras
\[
k \to k^{\mathbb{Z}/p\mathbb{Z} \times \mathbb{Z}/p\mathbb{Z}} \to A_p 
\to k (\mathbb{Z}/2\mathbb{Z}) \to k.
\]
As algebras,
\[
A_p \cong k^{(2p)} \oplus M_2(k)^{(\tfrac{p(p-1)}{2})}
\quad \mbox{ and } \quad
A_p^* \cong k^{(p^2)} \oplus M_p(k),
\]
where $M_n(k)$ denotes the algebra of $n$-by-$n$
matrices over $k$.

\begin{proposition}
\label{describe Hp}
Both $H_p$ and $H_p^*$ are extensions of $A_p$ by $k^{\mathbb{Z}/2\mathbb{Z}}$, i.e.,
each of them  fits into a short exact sequence of Hopf algebras
\begin{equation}
\label{short}
k \to k^{\mathbb{Z}/2\mathbb{Z}} \to H \to A_p \to k,
\end{equation}
where $H = H_p$ or $H=H_p^*$.

As algebras, $H_p \cong H_p^* \cong
k^{(2p)} \oplus M_2(k)^{(\tfrac{p(p-1)}{2})} \oplus M_p(k)^{(2)}$.
\end{proposition}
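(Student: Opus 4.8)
The plan is to extract both short exact sequences from the equivariantization picture and to read off the Wedderburn decompositions from the orbit structure of the $\mathbb{Z}/2\mathbb{Z}$-action on $\C_p$. I will use throughout that $\C_p \cong \TY(\mathbb{Z}/p\mathbb{Z}\times\mathbb{Z}/p\mathbb{Z},\chi,\tfrac1p)$ admits a fiber functor (Remark~\ref{Tambara fiber}), so $\C_p\cong\Rep(A_p^*)$: the Kac--Paljutkin algebra $A_p^*\cong k^{(p^2)}\oplus M_p(k)$ has exactly the right simple modules to match the $p^2$ invertible objects and the single $p$-dimensional object of $\C_p$. Write $A=\mathbb{Z}/p\mathbb{Z}\times\mathbb{Z}/p\mathbb{Z}$ and $t(x,y)=(y,x)$ as in \eqref{t}. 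The algebra structure of $H_p$ is then immediate: the simple objects of $(\C_p)^{\mathbb{Z}/2\mathbb{Z}}$ are the $t$-orbits on simples of $\C_p$ equipped with equivariant structures. Each of the $p$ diagonal fixed points $a=t(a)$ admits two equivariant structures and gives $2p$ invertible objects; each of the $\tfrac{p(p-1)}2$ two-element orbits $\{a,t(a)\}$ induces a single simple object of dimension $2$; and the fixed object $m$ carries two equivariant structures, giving two simple objects of dimension $p$. Hence $H_p\cong k^{(2p)}\oplus M_2(k)^{(p(p-1)/2)}\oplus M_p(k)^{(2)}$, and indeed $2p+2p(p-1)+2p^2=4p^2$.

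Next I would produce the sequence for $H_p^*$. By construction $\Rep(H_p)=(\C_p)^{\mathbb{Z}/2\mathbb{Z}}$ contains the central Tannakian subcategory $\Rep(\mathbb{Z}/2\mathbb{Z})$ of trivial equivariant objects, which corresponds to a surjection $H_p\twoheadrightarrow k(\mathbb{Z}/2\mathbb{Z})$ whose kernel is the de-equivariantization of $(\C_p)^{\mathbb{Z}/2\mathbb{Z}}$, namely $A_p^*$. This gives $k\to A_p^*\to H_p\to k(\mathbb{Z}/2\mathbb{Z})\to k$, and dualizing yields $k\to k^{\mathbb{Z}/2\mathbb{Z}}\to H_p^*\to A_p\to k$, which is the asserted extension for $H=H_p^*$.

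For $H_p$ itself I would exploit the Tambara--Yamagami grading $\C_p=(\C_p)_0\oplus(\C_p)_1$. Since $t$ fixes $m$ and preserves $A$, this grading is $t$-invariant and descends to a faithful $\mathbb{Z}/2\mathbb{Z}$-grading of $(\C_p)^{\mathbb{Z}/2\mathbb{Z}}$, i.e.\ to a central Hopf subalgebra $k^{\mathbb{Z}/2\mathbb{Z}}\subseteq H_p$ (central because the adjoint action factors through $\Aut(\mathbb{Z}/2\mathbb{Z})=1$) with quotient $Q:=H_p/\!/k^{\mathbb{Z}/2\mathbb{Z}}$. Its trivial component is $((\C_p)_0)^{\mathbb{Z}/2\mathbb{Z}}=(\Vec_A)^{\mathbb{Z}/2\mathbb{Z}}$, which by Example~\ref{crossed product} (via $\Vec_A\cong\Rep(\widehat A)$) is $\Rep(\Gamma)$ for the semidirect product $\Gamma=A\rtimes\mathbb{Z}/2\mathbb{Z}$ attached to the swap action; thus $\Rep(Q)\cong\Rep(\Gamma)$ and $Q$ is a cocycle twist of $k\Gamma$ of dimension $2p^2$. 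The decisive step --- which I expect to be the main obstacle --- is to show $Q\not\cong k\Gamma$. Indeed, were $Q$ cocommutative, $H_p$ would be an extension of a cocommutative Hopf algebra by a commutative one, forcing $(\C_p)^{\mathbb{Z}/2\mathbb{Z}}$ to be group-theoretical by Natale's theorem (Remark~\ref{natale}); this contradicts the non-group-theoreticity proved in Proposition~\ref{no invariant} and Corollary~\ref{criterion for CG to be gr th}. Hence $Q$ is the non-cocommutative twist of $k\Gamma$, which retains precisely the $p^2$ group-likes lying in $A$ and is therefore Masuoka's $A_p$; this yields $k\to k^{\mathbb{Z}/2\mathbb{Z}}\to H_p\to A_p\to k$.

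Finally, dualizing the sequence $k\to k^{\mathbb{Z}/2\mathbb{Z}}\to H_p\to A_p\to k$ just obtained presents $H_p^*$ as an extension $k\to A_p^*\to H_p^*\to k(\mathbb{Z}/2\mathbb{Z})\to k$, hence $\Rep(H_p^*)$ as a second $\mathbb{Z}/2\mathbb{Z}$-equivariantization of $\C_p$. Counting its simple objects exactly as before --- the trivial graded component being $\Rep(A_p)\cong k^{(2p)}\oplus M_2(k)^{(p(p-1)/2)}$ and the fixed object $m$ again contributing two simples of dimension $p$ --- gives $H_p^*\cong k^{(2p)}\oplus M_2(k)^{(p(p-1)/2)}\oplus M_p(k)^{(2)}$, the same algebra structure as $H_p$.
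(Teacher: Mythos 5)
Your proposal reaches every claim of the Proposition and is correct in substance, but it is a hybrid. The extension for $H_p$ itself you obtain exactly as the paper does: the inherited $\mathbb{Z}/2\mathbb{Z}$-grading yields a central Hopf subalgebra $k^{\mathbb{Z}/2\mathbb{Z}}\subseteq H_p$ (your parenthetical reason for centrality is off --- the point is that degree idempotents act by scalars on every irreducible, not anything about $\Aut(\mathbb{Z}/2\mathbb{Z})$ --- but the grading/central-group-like correspondence is standard and is what the paper uses), the trivial component is $(\Vec_A)^{\mathbb{Z}/2\mathbb{Z}}\cong\Rep(A\rtimes\mathbb{Z}/2\mathbb{Z})$, Schauenburg makes the quotient a twist $(k\Gamma)^J$, Natale plus the already-proved non-group-theoreticity forces $J$ non-trivial, and Masuoka identifies the quotient with $A_p$. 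Your treatment of $H_p^*$, however, genuinely diverges. The paper dualizes the $H_p$-sequence, observes that $A_p^*$ is an index-$2$ (hence normal) Hopf subalgebra of $H_p^*$, and finishes with a Clifford-theoretic argument inside $H_p^*$ (two $p$-dimensional irreducibles, centrality in $H_p^*$ of the order-$2$ group-like of $A_p^*$, Masuoka once more). You instead read the sequence $k\to A_p^*\to H_p\to k(\mathbb{Z}/2\mathbb{Z})\to k$ off the central Tannakian subcategory $\Rep(\mathbb{Z}/2\mathbb{Z})\subseteq(\C_p)^{\mathbb{Z}/2\mathbb{Z}}$ together with the fact that de-equivariantization recovers $\C_p$, dualize to get the asserted sequence for $H_p^*$, and then count simples of $\Rep(H_p^*)$ by recognizing it as a second $\mathbb{Z}/2\mathbb{Z}$-equivariantization of $\C_p$ (where $m$, being the unique non-invertible simple, is fixed by any action and always carries exactly two equivariant structures). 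This is cleaner and valid --- for $G=\mathbb{Z}/2\mathbb{Z}$ no fiber-functor subtleties arise since every fiber functor of $\Rep(\mathbb{Z}/2\mathbb{Z})$ is standard --- but the dictionary you invoke (central Tannakian subcategories, cocentral exact sequences, (de)equivariantization) is machinery that postdates this paper; the paper's more pedestrian Hopf-algebraic detour exists precisely to avoid it.

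Two of your steps are asserted where they need real justification. First, the claim that the non-cocommutative twist $(k\Gamma)^J$ ``retains precisely the $p^2$ group-likes lying in $A$'' is not automatic: the group-likes of a twisted group algebra depend on $J$. The paper supplies the missing ingredient via Etingof--Gelaki: a non-trivial twist of $k\Gamma$ is gauge-equivalent to one arising from a non-degenerate $2$-cocycle on a subgroup of square order, necessarily the Sylow $p$-subgroup $A$; then every element of $A$ remains group-like (as $J\in (kA)^{\ot 2}$ and $A$ is abelian) while no reflection does, since conjugation by $t$ inverts the class in $H^2(A,k^\times)$. Without this, Masuoka's uniqueness theorem cannot be applied to the quotient, so this is a genuine (though fillable) gap. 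Second, your opening identification $\C_p\cong\Rep(A_p^*)$, and likewise ``kernel $=A_p^*$,'' requires more than matching fusion rules: the correct argument is that any Hopf algebra $B$ with $\Rep(B)\simeq\C_p$ has dimension $2p^2$ and exactly $p^2$ one-dimensional representations, so $B^*$ has exactly $p^2$ group-likes and Masuoka's uniqueness gives $B^*\cong A_p$, i.e.\ $B\cong A_p^*$. With these two points made explicit, your proof stands as a correct, partly alternative route to the paper's.
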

\begin{proof}
The isomorphism type of the algebra $H_p$ is found directly by
computing Frobenius-Perron dimensions of simple objects
of $\Rep(H_p) \cong (\C_p)^{\mathbb{Z}/2\mathbb{Z}}$.

Since the fusion category $(\C_p)^{\mathbb{Z}/2\mathbb{Z}}$
inherits a $\mathbb{Z}/2\mathbb{Z}$-grading from $\C_p$, the Hopf algebra $H_p$
contains a central group-like element of order $2$ and, hence, a central
Hopf subalgebra $k^{\mathbb{Z}/2\mathbb{Z}}$. The representation category
of the quotient Hopf algebra $\overline{H}_p = H/H (k^{\mathbb{Z}/2\mathbb{Z}})^+$
is equivalent to  a  $\mathbb{Z}/2\mathbb{Z}$-equivariantization of 
the fusion subcategory 
$\Vec_{\mathbb{Z}/p\mathbb{Z} \times \mathbb{Z}/p\mathbb{Z}}
\cong \Rep(\mathbb{Z}/p\mathbb{Z} \times \mathbb{Z}/p\mathbb{Z})$ of $\C_p$.
By \eqref{t} the latter comes from an order $2$ group automorphism
$t$ of $\mathbb{Z}/p\mathbb{Z} \times \mathbb{Z}/p\mathbb{Z}$.
From Example~\ref{crossed product} we see that 
$\Rep(\overline{H}_p ) \cong \Rep(kG)$, where 
$G = (\mathbb{Z}/p\mathbb{Z} \times \mathbb{Z}/p\mathbb{Z})\rtimes
\mathbb{Z}/2\mathbb{Z}$.
It follows from the result of P.~Schauenburg \cite{Sch} that  there is cocycle 
twist $J$ on the group Hopf algebra $kG$ such that $H_p$ is isomorphic to
the deformation $(kG)^J$ of $kG$ by means of $J$.

Thus, $H_p$ fits into an extension
\begin{equation}
\label{Hp}
k \to k^{\mathbb{Z}/2\mathbb{Z}} \to H_p \to (kG)^J \to k.
\end{equation}
Note that the Hopf algebra  $(kG)^J$ is necessarily non cocommutative,
since otherwise $H_p$ would be group-theoretical by the result of S.~Natale 
\cite{Nt}, see Remark~\ref{natale}. Hence, the cocycle twist $J$ must be  non-trivial.
It follows from the work of P.~Etingof and S.~Gelaki \cite{EG} 
that $J$ comes from a non-degenerate $2$-cocycle on the Sylow $p$-subgroup of $G$.
Therefore, the deformed Hopf algebra $(kG)^J$ has exactly $p^2$ group-like elements.
Hence, $(kG)^J \cong A_p$ by \cite{Ma}.

Thus, $A_p^*$
is an index $2$ (and, hence, normal) Hopf subalgebra of $H_p^*$.
Therefore,  $H_p^*$ has exactly two $p$-dimensional irreducible representations
and the central group-like element of order $2$ in $A_p^*$ must also be central
in $H_p^*$. The corresponding 
quotient Hopf algebra $H_p^*/H_p^* (k^{\mathbb{Z}/2\mathbb{Z}})^+$
is non cocommutative and contains a Hopf subalgebra isomorphic to
$A_p^*/A_p^*   (k^{\mathbb{Z}/2\mathbb{Z}})^+ \cong
k(\mathbb{Z}/p\mathbb{Z} \times \mathbb{Z}/p\mathbb{Z})$. 
Again, $H_p^*/H_p^* (k^{\mathbb{Z}/2\mathbb{Z}})^+\cong A_p$ 
by \cite{Ma} and so  $H_p^*$ fits into the same extension \eqref{Hp} as $H_p$
and has the same algebra structure. 
\end{proof}


\begin{corollary}
The Hopf algebras $H_p$ are
upper and lower semisolvable in the sense of 
S.~Montgomery and S.~Witherspoon \cite{MW}.
\end{corollary}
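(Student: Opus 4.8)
The plan is to exhibit explicit normal series realizing both forms of semisolvability, using Proposition~\ref{describe Hp} as the only nontrivial input. Recall from \cite{MW} that a semisimple Hopf algebra $H$ is \emph{lower semisolvable} if it admits a chain of Hopf subalgebras
\[
k = H_0 \subseteq H_1 \subseteq \cdots \subseteq H_n = H
\]
in which each $H_{i}$ is a normal Hopf subalgebra of $H_{i+1}$ and every quotient $H_{i+1}/H_{i+1}H_i^+$ is commutative or cocommutative, and that it is \emph{upper semisolvable} exactly when $H^*$ is lower semisolvable. Because of this duality, and because Proposition~\ref{describe Hp} treats $H_p$ and $H_p^*$ symmetrically, it suffices to prove that \emph{both} $H_p$ and $H_p^*$ are lower semisolvable; lower semisolvability of $H_p^*$ then gives upper semisolvability of $H_p$.

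The key structural tool is the correspondence between normal Hopf subalgebras of a quotient $H/HK^+$ and those normal Hopf subalgebras of $H$ that contain the normal Hopf subalgebra $K$: the preimage of a normal Hopf subalgebra of the quotient is again normal, with the expected iterated quotients. First I would record that $A_p$ itself is lower semisolvable. Indeed, by \cite{Ma} it sits in the extension
\[
k \to k^{\mathbb{Z}/p\mathbb{Z} \times \mathbb{Z}/p\mathbb{Z}} \to A_p \to k(\mathbb{Z}/2\mathbb{Z}) \to k,
\]
so the chain $k \subseteq k^{\mathbb{Z}/p\mathbb{Z} \times \mathbb{Z}/p\mathbb{Z}} \subseteq A_p$ has commutative first factor $k^{\mathbb{Z}/p\mathbb{Z} \times \mathbb{Z}/p\mathbb{Z}}$ and cocommutative top factor $k(\mathbb{Z}/2\mathbb{Z})$.

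Now fix $H \in \{H_p,\, H_p^*\}$. By Proposition~\ref{describe Hp}, $k^{\mathbb{Z}/2\mathbb{Z}}$ is a (commutative) normal Hopf subalgebra of $H$ with $H/H(k^{\mathbb{Z}/2\mathbb{Z}})^+ \cong A_p$. Let $L$ be the preimage in $H$ of the normal Hopf subalgebra $k^{\mathbb{Z}/p\mathbb{Z} \times \mathbb{Z}/p\mathbb{Z}} \subseteq A_p$ under the surjection $H \twoheadrightarrow A_p$. Then
\[
k \subseteq k^{\mathbb{Z}/2\mathbb{Z}} \subseteq L \subseteq H
\]
is a chain of Hopf subalgebras with each term normal in the next, and its successive quotients are $k^{\mathbb{Z}/2\mathbb{Z}}$, then $L/L(k^{\mathbb{Z}/2\mathbb{Z}})^+ \cong k^{\mathbb{Z}/p\mathbb{Z} \times \mathbb{Z}/p\mathbb{Z}}$, then $H/HL^+ \cong A_p/A_p(k^{\mathbb{Z}/p\mathbb{Z} \times \mathbb{Z}/p\mathbb{Z}})^+ \cong k(\mathbb{Z}/2\mathbb{Z})$, the first two commutative and the last cocommutative. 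Hence $H$ is lower semisolvable. Applying this to $H_p$ and to $H_p^*$ and invoking the duality above yields both conclusions.

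The content here is essentially formal once Proposition~\ref{describe Hp} is in hand; the only point requiring care is the verification that $L$ is genuinely normal in $H$ and that the iterated quotients compute as claimed, which is exactly the standard correspondence theorem for normal Hopf subalgebras of semisimple Hopf algebras. I do not expect a genuine obstacle beyond this bookkeeping.
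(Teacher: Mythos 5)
Your top-level strategy --- deduce everything from Proposition~\ref{describe Hp} for both $H_p$ and $H_p^*$ and close with the Montgomery--Witherspoon duality between upper and lower semisolvability --- is sound, and it mirrors what the paper's statement rests on. The gap is the construction of the middle term $L$. First, the set-theoretic preimage of a Hopf subalgebra under a Hopf algebra surjection is a subalgebra but in general \emph{not} a subcoalgebra: already for $\pi: k(\mathbb{Z}/4\mathbb{Z}) \to k(\mathbb{Z}/2\mathbb{Z})$, the preimage of $k$ is $\mathrm{span}\{1, g^2, g-g^3\}$, and $\Delta(g-g^3)=g\ot g - g^3\ot g^3$ does not lie in the tensor square of this space. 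So your $L$ is not yet a Hopf subalgebra. Second, and more seriously, the ``standard correspondence theorem'' you invoke to repair this does not exist: it is false for semisimple Hopf algebras that every normal Hopf subalgebra of $H/HK^+$ lifts to a (normal) Hopf subalgebra of $H$ containing $K$. Take $H=k^G$ with $G=D_8$ dihedral of order $8$, $N\trianglelefteq G$ the Klein four subgroup $\{1,r^2,s,r^2s\}$, and the exact sequence $k\to k^{G/N}\to k^G\to k^N\to k$. Every Hopf subalgebra of $k^G$ has the form $k^{G/P}$ with $P\trianglelefteq G$ (quotient Hopf algebras of $kG$ are spanned by images of group-likes, hence are group algebras of quotient groups), so the Hopf subalgebras containing $k^{G/N}$ correspond to $P\in\{1,\,Z(G),\,N\}$, and their images in $k^N$ are $k^N$, $k^{N/Z(G)}$, $k$. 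The subgroup $M=\langle s\rangle$ is normal in $N$ but not in $G$, so the normal Hopf subalgebra $k^{N/M}\subseteq k^N$ arises from no Hopf subalgebra of $k^G$ whatsoever. This is the Hopf-theoretic shadow of the non-transitivity of normality: the group-theoretic correspondence theorem does not survive dualization, and ``preimage'' has no naive meaning here.

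The efficient repair is to argue ``from above'' rather than ``from below'', which is presumably what the paper intends. An upper normal series in the sense of \cite{MW} only requires each \emph{single} surjection in the chain $H \to A_p \to k(\mathbb{Z}/2\mathbb{Z}) \to k$ to be a normal quotient with commutative or cocommutative kernel, and that is exactly the data already in hand: the sequence \eqref{short} of Proposition~\ref{describe Hp} gives the factor $k^{\mathbb{Z}/2\mathbb{Z}}$ (commutative), Masuoka's sequence for $A_p$ gives the factor $k^{\mathbb{Z}/p\mathbb{Z}\times\mathbb{Z}/p\mathbb{Z}}$ (commutative), and the last factor is $k(\mathbb{Z}/2\mathbb{Z})$ (cocommutative). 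No lifting of subalgebras is needed, so both $H_p$ and $H_p^*$ are upper semisolvable at once, and lower semisolvability of $H_p$ follows from upper semisolvability of $H_p^*$ by the duality lemma you quoted. If you insist on your formulation, the intermediate term must be defined as the coinvariants $L=H^{co\,k(\mathbb{Z}/2\mathbb{Z})}$ of the composite surjection, and you must then prove this coideal subalgebra is a normal Hopf subalgebra; for finite-dimensional Hopf algebras that is equivalent to normality of the dual Hopf subalgebra $k^{\mathbb{Z}/2\mathbb{Z}}\subseteq H^*$, which holds here only because the relevant order-$2$ group-like is central (established inside the proof of Proposition~\ref{describe Hp}). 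In other words, any honest repair of your argument consumes the dual exact sequence anyway, which is why proving upper semisolvability first and then dualizing is the right order of quantifiers.
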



\bibliographystyle{ams-alpha}

\end{document}